\theoremstyle{theorem}
\newtheorem{theorem}{Theorem}
\newtheorem{lemma}{Lemma}
\newtheorem{proposition}{Proposition}
\theoremstyle{definition}
\newtheorem{definition}{Definition}
\newtheorem{assumption}{Assumption}
\newcommand{\err}[1]{\text{err}(#1)}
\newcommand{\X}{\mathcal{X}}
\newcommand{\Xspl}{\mathbf{X}}
\newcommand{\Yspl}{\mathbf{Y}}
\newcommand{\V}{\mathcal{V}}
\newcommand{\real}{{\rm I\!R}}
\newcommand{\expec}{\mathbb{E} \,}
\newcommand{\norm}[1]{\left\|#1\right\|}
\newcommand{\abs}[1]{\left|#1\right|}
\newcommand{\paren}[1]{\left(#1\right)}
\newcommand{\braces}[1]{\left\{#1\right\}}
\newcommand{\pr}[1]{\mathbb{P}\left(#1\right)}
\newcommand{\prf}[2]{\mathbb{P}_{#1}\left(#2\right)}
\newcommand{\ind}[1]{\mathbf{1}\left\{#1\right\}}
\DeclareMathOperator*{\argmax}{argmax}
\newcommand{\Ep}{\mathbb{E}}
\newcommand{\Prob}{\mathbb{P}}
\title{Achieving the time of $1$-NN, but the accuracy of $k$-NN}
\author{
  Lirong Xue \\
  Princeton University, ORFE \\
  \texttt{lirongx@princeton.edu} \\
  \And
  Samory Kpotufe \\
  Princeton University, ORFE \\
  \texttt{samory@princeton.edu}
}
\begin{document}

%

%

\twocolumn[

\aistatstitle{Achieving the time of $1$-NN, but the accuracy of $k$-NN}

\aistatsauthor{ Lirong Xue \And Samory Kpotufe  }

\aistatsaddress{ Princeton University \And Princeton University } ]

\begin{abstract}
 We propose a simple approach which, given distributed computing resources, can nearly achieve the accuracy of $k$-NN prediction, while matching (or improving) the faster prediction time of $1$-NN. The approach consists of aggregating \emph{denoised} $1$-NN predictors over \emph{a small number} of distributed subsamples. We show, both theoretically and experimentally, that small subsample sizes suffice to attain similar performance as $k$-NN, without sacrificing the computational efficiency of $1$-NN. 
\end{abstract}

	\section{INTRODUCTION}
While $k$-Nearest Neighbor ($k$-NN) classification or regression can achieve significantly better prediction accuracy that $1$-NN ($k = 1$), practitioners often default to $1$-NN as it can achieve much faster prediction that scales better with large sample size $n$. In fact, much of the commercial tools for nearest neighbor search remain optimized for $1$-NN rather than for $k$-NN, further biasing practice towards $1$-NN. Unfortunately, $1$-NN is statistically inconsistent, i.e., its prediction accuracy plateaus early as sample size $n$ increases, while $k$-NN keeps improving longer 
for choices of $k \xrightarrow{n \to \infty} \infty$. 

In this work we consider having access to \emph{a small number} of distributed computing units, and ask whether better tradeoffs between $k$-NN and $1$-NN can be achieved by harnessing parallelism at prediction time.   
A simple idea is \emph{bagging} multiple $1$-NN predictors computed over distributed subsamples; however this tends to require large number of subsamples, while the number of computing units is often constrained in practice. In fact, 
an \emph{infinite} number of subsamples is assumed in all known consistency guarantees for the $1$-NN bagging approach \citep{biau2010rate, samworth2012optimal}. Here, we are particularly interested in small numbers of distributed subsamples (say 1 to 10) as a practical matter. Hence, we consider a simple variant of the above idea, consisting of  aggregating a few \emph{denoised} 1-NN predictors. With this simple change, we obtain the same theoretical error-rate guarantees as for $k$-NN, using few subsamples, while individual processing times are of the same order or better than $1$-NN's computation time. 

The main intuition behind denoising is as follows. The increase in variance due to subsampling is hard to counter if too few predictors are aggregated. We show that this problem is suitably addressed by \emph{denoising} each subsample as a preprocessing step, i.e., replacing the subsample labels with $k$-NN estimates based on the original data. Prediction then consists of aggregating -- by averaging or by majority -- the $1$-NN predictions from a few denoised subsamples (of small size $m \ll n$).




Interestingly, as shown both theoretically and experimentally, we can let the subsampling ratio $(m/n)\xrightarrow{n\to \infty}0$ while achieving a prediction accuracy of the same order as that of $k$-NN. Such improved accuracy over vanilla $1$-NN is verified experimentally, even for relatively small number of distributed predictors. Note that, in practice, we aim to minimize the number of distributed predictors, or equivalently the number of computing units which is usually costly 
in its own right. This is therefore a main focus in our experiments. In particular, we will see that even with a single denoised $1$-NN predictor, i.e., one computer, we can observe a significant improvement in accuracy over vanilla $1$-NN while maintaining the prediction speed of $1$-NN. Our main focus in this work is classification -- perhaps the most common form of NN prediction -- but our results readily extend to regression. 

\subsection*{Detailed Results And Related Work}
While nearest neighbor prediction methods are among the oldest and most enduring in data analysis \citep{fix1951discriminatory, cover1967nearest, kulkarni1995rates}, their theoretical performance in practical settings is still being elucidated. For statistical consistency, it is well known that one needs a number $k\xrightarrow{n\to \infty}\infty$ of neighbors, i.e., the vanilla $1$-NN method is inconsistent for either regression or classification~\citep{devroye1994strong}. In the case of regression, \cite{SK:78} shows that convergence rates ($l_2$ excess error over Bayes) behave as $O(n^{-2/(2+d)})$, for Lipschitz regression functions over data with intrinsic dimension $d$; this then implies a rate of $O(n^{-1/(2+d)})$ for binary classification via known relations between regression and classification rates (see e.g. \cite{DGL:73}). Similar rates are recovered in \citep{cannings2017local} under much refined parametrization of the marginal input distribution, while a recent paper of \cite{moscovich2016minimax} recovers similar rates in semisupervised settings. 

Such classification rates can be sharpened by taking into account the \emph{noise margin}, i.e., the mass of data away from the decision boundary. This is done in the recent work of \cite{chaudhuri2014rates} which 
obtain faster rates of the form $O(n^{-\alpha(\beta +1)/(2\alpha + d)})$ -- where the regression function is assumed $\alpha$-smooth -- which can be much faster for large $\beta$ (characterizing the noise margin). However such rates require large number of neighbors $k = O(n^{2\alpha/(2\alpha + d)})$ growing as a root of sample size $n$; such large 
$k$ implies much slower prediction time in practice, which is exacerbated by the scarcity of optimized tools for `$k$' nearest neighbor search. In contrast, fast commercial tools for $1$-NN search are readily available, building on various space partitioning data structures \citep{KL:76, C:74, BKL:71, gionis1999similarity}. 

In this work we show that the classification error of the proposed approach, namely aggregated denoised $1$-NN's, is 
of the same optimal order $\tilde O(n^{-\alpha(\beta +1)/(2\alpha + d)})$ plus a term $\tilde O(m^{-\alpha(\beta + 1)/d})$ where $m \leq n$ is the subsample size used for each denoised $1$-NN. This additional term, due to subsampling, is of lower order provided $m = \tilde \Omega (n^{d/(2\alpha + d)})$; in other words we can let the subsampling ratio
$(m/n) = \tilde \Omega (n^{-2\alpha/(2\alpha + d)}) \xrightarrow{n\to \infty} 0$ while achieving the same rate as $k$-NN. 
We emphasize that the smaller the subsampling ratio, the faster the prediction time: rather than just maintaining the prediction time of vanilla $1$-NN, we can actually get considerably better prediction time using smaller subsamples, while at the same time considerably improving prediction accuracy towards that of $k$-NN. Finally notice that the theoretical subsampling ratio of $\tilde \Omega (n^{-2\alpha/(2\alpha + d)})$ is best with smaller $d$, the \emph{intrinsic} dimension of the data, which is not assumed to be known a priori. Such intrinsic dimension $d$ is smallest for structured data in $\real^D$, e.g. data on an unknown manifold, or sparse data, and therefore suggests that much smaller subsamples -- hence faster prediction times -- are possible with structured data while achieving good prediction accuracy.  

As mentioned earlier, the even simpler approach of \emph{bagging} $1$-NN predictors is known to be consistent \citep{biau2010layered, biau2010rate, samworth2012optimal}, however only in the case of an infinite bag size, corresponding to an infinite number of computing units in our setting -- we assume one subsample per computing unit so as to maintain or beat the prediction time of $1$-NN. Interestingly, as first shown in \citep{biau2010layered, biau2010rate}, the subsampling ratio $(m/n)$ can also tend to $0$ as $n\to \infty$, while achieving optimal prediction rates (for fixed $\alpha = 1$, $\beta = 0$), albeit assuming an infinite number of subsamples. 
In contrast we show optimal rates -- on par with those of $k$-NN -- for even one \emph{denoised} subsample. 
This suggests, as verified experimentally, that few such denoised subsamples are required for good prediction accuracy. 

The recent work of \cite{kontorovich2015bayes}, of a more theoretical nature, considers a similar question as ours, and derives a \emph{penalized} $1$-NN approach shown to be statistically consistent unlike vanilla $1$-NN. The approach of \cite{kontorovich2015bayes} roughly consists of finding a subsample of the data whose induced $1$-NN achieves a significant margin between classes (two classes in that work). Unfortunately finding such subsample can be prohibitive 
 (computable in time $O(n^{4.376})$) in the large data regimes of interest here. In contrast, our training phase only involves random subsamples, and cross-validation over a denoising parameter $k$ (i.e., our training time is akin to the usual $k$-NN training time). 

Finally, unlike in the above cited works, our rates are established for multiclass classification (for the sake of completion), and depend logarithmically on the number of classes. Furthermore, as stated earlier, our results extend beyond classification to regression, and in fact are established by first obtaining \emph{regression} rates for estimating the so-called regression function $\expec[Y| X]$. 

{\bf Paper outline.}
Section \ref{sec:prelim} presents our theoretical setup and the prediction approach. Theoretical results are discussed in Section \ref{sec:overview}, and the analysis in Section \ref{sec:analysis}. Experimental evaluations on real-world datasets are presented in Section \ref{sec:experiments}. 


\section{PRELIMINARIES}
\label{sec:prelim} 
\subsection{Distributional Assumptions}
Our main focus is classification, although our results extend to regression. 
Henceforth we assume we are given an i.i.d. sample 
$\paren{\Xspl, \Yspl} \doteq \paren{X_i, Y_i}_1^n$ where $X\in \X \subset \real^D$, and 
$Y \in  [L] \doteq \braces{1, 2, \ldots, L}$.

The conditional distribution $P_{Y|X}$ is fully captured by the so-called \emph{regression function}, defined as 
$\eta: \X \mapsto [0, 1]^L$, where $\eta_i(x) = \pr{Y = i | X = x}$.  We assume the following on $P_{X, Y}$.

\begin{assumption}[Intrinsic dimension and regularity of $P_X$]
\label{Assp_int_dim}
First, for any $x\in \X$ and $r>0$, define the \emph{ball} $B(x, r)\doteq \braces{x'\in \X: \norm{x - x'} \leq r}$.  
We assume, there exists an integer $d$, and a constant $C_d$ such that, for all $x \in \X, r>0$, 
we have $P_X(B(x, r)) \geq C_d r^d$. 
\end{assumption}

In this work $d$ is unknown to the procedure.  However, as is now understood from previous work (see e.g. \cite{SK:78}), the performance of NN methods depends on such intrinsic $d$. We will see that the performance of the approach of interest here would also depends on such unknown $d$. 
In particular, as is argued in \citep{SK:78}, $d$ is low for low-dimensional manifolds, or sparse data, so we would think of $d\ll D$ for structured data. Note that the above assumption also imposes regularity on $P_X$, namely by ensuring sufficient mass locally on $\X$ (so that NNs of a point $x$ are not arbitrarily far from it). 

\begin{assumption}[Smoothness of $\eta$]
\label{Assp_smooth}
The function $\eta$ is $(\lambda, \alpha)$-H\"older for some $\lambda>0, 1\geq \alpha>0$, i.e., 
$$ \forall x, x'\in \X, \quad \norm{\eta(x) - \eta(x')}_{\infty} \leq \lambda \norm{x - x'}^\alpha.$$
\end{assumption}

We will use the following version of Tsybakov's noise condition \citep{audibert2007fast}, adapted to the multiclass setting. 

\begin{assumption}[Tsybakov noise condition]
\label{Assp_noise_margin} 
For any $x\in \X$, let $\eta_{(l)}(x)$ denote the $l$'th largest element in $\{\eta_l(x)\}_{l=1}^L$. 
There exists $\beta>0$, and $C_\beta>0$ such that 
$$ \forall t> 0, \quad \pr { \abs{\eta_{(1)}(X) - \eta_{(2)}(X)} \leq t}\leq C_\beta t^\beta.$$
\end{assumption}

\subsection{Classification Procedure}
For any classifier $h: \X \mapsto [L]$, we are interested in the $0$-$1$ classification error 
$$\err{h} = \prf{X, Y}{h(X) \neq Y}.$$  

It is well known that the above error is mimimized by the \emph{Bayes} classifier 
$h^*(x)  \doteq \argmax_l \eta_l(x).$ Therefore, for any estimated classifier $\hat h$, we are interested in 
the \emph{excess error} $\err{\hat h} - \err{h^*}$. We first recall the following basic nearest neighbor estimators. 

\begin{definition}[$k$-NN prediction]
\label{def:kNN}
Given $k \in \mathbb{N}$, let $k\text{NN-I}(x)$ denote the indices of the $k$ nearest neighbors of $x$ in the sample 
$\Xspl$. Assume, for simplicity, that ties are resolved so that $\abs{k\text{NN-I}(x)} = k$. 
The $k$-NN classifier can be defined via the \emph{regression} estimate $\hat \eta: \X \mapsto [0, 1]^L$, where 
\begin{align*}
  \hat \eta_l(x) \doteq \frac{1}{k} \sum_{i \in k\text{NN-I}(x)}\ind{Y_i = l}.   
\end{align*}
 The $k$-NN classifier is then obtained as:\\
  $$h_{\hat \eta}(x) = \argmax_l \hat \eta_l(x).$$

Finally we let $r_k(x)$ denote the distance from $x$ to its $k$-th nearest neighbor. 
\end{definition}

We can now formally describe the approach considered in this work. 

\begin{definition}[Denoised $1$-NN] 
  Consider a random subsample ({\bf without replacement}) $\Xspl'$ of $\Xspl$ of size $m \leq n$. For any $x\in \X$, let $\text{NN}(\Xspl'; x)$ denote the nearest neighbor of $x$ in $\Xspl'$. The \underline{denoised $1$-NN} estimate at $x$ is given as $\hat h(x) = h_{\hat \eta}(\text{NN}(\Xspl'; x))$, where $h_{\hat \eta}$ is as defined above for some fixed $k$. 

  This estimator corresponds to $1$-NN over a sample $\Xspl'$ where each $X_i' \in \Xspl'$ is {\bf prelabeled} as 
  $h_{\hat \eta}(X_i')$. 
\end{definition}

The resulting estimator, which we denote \emph{subNN} for simplicity, is defined as follows. 
\begin{definition}[subNN]
\label{def:subNN}
Let $\{\hat h_i\}_{i = 1}^I$, denote denoised $1$-NN estimators defined over $I$ {\bf independent} subsamples of size $m$ (i.e., the $I$ sets of indices corresponding to each subsample are picked independently, although the indices in each set is picked with replacement in $[n]$). At any $x\in \X$, the \underline{subNN estimate} $\bar h(x)$ is the majority label in $\{\hat h_i(x)\}_{i = 1}^I$. 
\end{definition}

It is clear that the subNN estimate can be computed in parallel over $I$ machines, while the final step -- namely, computation of the majority vote -- takes negligible time. Thus, we will view the {\bf prediction time complexity} at any query $x$ as the average time (over $I$ machines) it takes to compute the $1$-NN of $x$ on each subsample. This time complexity gets better as $(m/n)\to 0$. Furthermore, we will show that, even with relatively small $I$ (increasing variability), we can let $(m/n)$ get small while attaining an excess error on par with that of $k$-NN (here $h_{\hat \eta}$). This is verified experimentally.

\section{OVERVIEW OF RESULTS} 
\label{sec:overview}
Our main theoretical result, Theorem \ref{theo:main} below, concerns the statistical performance of subNN. 
The main technicality involves characterizing the effect of subsampling and denoising on performance. 
Interestingly, the rate below does not depend on the number $I$ of subsamples: this is due to the \emph{averaging} 
effect of taking majority vote accross the $I$ submodels, and is discussed in detail in Section \ref{sec:analysis} 
(see proof and discussion of Lemma \ref{lemma:error_h_bar}). In particular, the rate is bounded in terms of a \emph{bad event} that is unlikely for a random submodel, and therefore unlikely to happen for a majority. 

\begin{theorem}\label{theo:main}
	Let $0< \delta<1$. Let $\V$ denote the VC dimension of balls on $\X$. 
	With probability at least $1- L\delta$, there exists a choice of $k\in [n]$, such that the estimate $\bar h$ satisfies 
	\begin{align*} 
	\err{\bar h} -\err{h^*} 
	&\leq C_1 \cdot \paren{\frac{\V\ln(Ln/\delta)}{n}}^{\alpha(\beta + 1)/(2\alpha + d)} \\
	&+ C_2\cdot \paren{\frac{\V \ln(m/\delta)}{m}}^{\alpha(\beta+1)/d}, 
	\end{align*} 
	for constants $C_1, C_2$ depending on $P_{X, Y}$.  
\end{theorem}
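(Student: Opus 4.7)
The strategy is to reduce classification excess error to pointwise regression-type bounds on the denoised $1$-NN, and then exploit the Tsybakov noise condition and a simple Markov argument to handle the majority vote without the number of subsamples $I$ appearing in the rate. Pointwise at a query $x$, a single denoised $1$-NN $\hat h_i$ evaluates $\hat\eta$ at the subsample nearest neighbor $y \doteq \text{NN}(\Xspl'_i; x)$ and returns $\argmax$; hence the event $\hat h_i(x) \neq h^*(x)$ is controlled by $\|\hat\eta(y) - \eta(x)\|_\infty$. I split this discrepancy via the triangle inequality into a \emph{denoising error} $\|\hat\eta(y) - \eta(y)\|_\infty$ and a \emph{subsampling displacement error}, which by Assumption \ref{Assp_smooth} is at most $\lambda\|y-x\|^\alpha$.

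Next, I would derive two uniform-in-$x$ high-probability bounds. The displacement bound follows from Assumption \ref{Assp_int_dim} by a standard VC-based argument on the class of balls (VC dimension $\V$): with probability at least $1-\delta/2$, every $x\in\X$ satisfies $\|x - \text{NN}(\Xspl'_i; x)\| \lesssim (\V\ln(m/\delta)/m)^{1/d}$. The denoising bound combines the H\"older bias $\lambda r_k(y)^\alpha$ over the $k$-NN ball with Bernstein/multiplicative-Chernoff concentration of each $\hat\eta_l$ around $\mathbb{E}[\hat\eta_l\mid\Xspl]$; a union bound over the $L$ classes (explaining the $L$ inside the log and the $L\delta$ in the theorem) and over the VC cover of ball-indicator events yields, uniformly in $y\in\X$ and with probability $\geq 1-L\delta/2$,
\begin{equation*}
\|\hat\eta(y) - \eta(y)\|_\infty \;\lesssim\; \lambda r_k(y)^\alpha + \sqrt{\V\ln(Ln/\delta)/k}.
\end{equation*}
Using $r_k(y)\lesssim (k/n)^{1/d}$ (again Assumption \ref{Assp_int_dim}) and choosing $k\asymp n^{2\alpha/(2\alpha+d)}$ up to log factors balances the bias-variance terms into a denoising rate $\Delta_{\text{dn}}\asymp (\V\ln(Ln/\delta)/n)^{\alpha/(2\alpha+d)}$.

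On the intersection of these good events, the total discrepancy $\|\hat\eta(y)-\eta(x)\|_\infty$ is uniformly bounded by $\Delta\doteq \Delta_{\text{dn}} + \Delta_{\text{sub}}$ with $\Delta_{\text{sub}}\asymp (\V\ln(m/\delta)/m)^{\alpha/d}$. Consequently, whenever the margin $\eta_{(1)}(x)-\eta_{(2)}(x) > 2\Delta$ every individual $\hat h_i$ returns $h^*(x)$, and moreover whenever $\bar h(x)\neq h^*(x)$ the suboptimality satisfies $\eta_{h^*(x)}(x)-\eta_{\bar h(x)}(x)\leq 2\Delta$ (since at least one voter $i$ has $\hat\eta_{\bar h(x)}(y_i)\geq \hat\eta_{h^*(x)}(y_i)$ and $\|\hat\eta-\eta\|_\infty\leq\Delta$). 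To keep $I$ out of the rate, I invoke Markov: the event $\{\bar h(x)\neq h^*(x)\}$ implies that at least half of the $I$ independent denoised predictors disagree with $h^*(x)$, so $\pr{\bar h(x)\neq h^*(x)}\leq 2\pr{\hat h_1(x)\neq h^*(x)}$. Combining with Assumption \ref{Assp_noise_margin}, the excess risk satisfies the standard multiclass Tsybakov calculation
\begin{equation*}
\err{\bar h} - \err{h^*} \;\leq\; 2\Delta \cdot 2\pr{|\eta_{(1)}(X)-\eta_{(2)}(X)|\leq 2\Delta} \;\leq\; 4C_\beta (2\Delta)^{\beta+1},
\end{equation*}
which after $(\Delta_{\text{dn}}+\Delta_{\text{sub}})^{\beta+1}\lesssim \Delta_{\text{dn}}^{\beta+1}+\Delta_{\text{sub}}^{\beta+1}$ gives exactly the two claimed terms.

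The main obstacle is the uniform-in-$y$ denoising bound: the evaluation point $y=\text{NN}(\Xspl'_i; x)$ is random and jointly dependent on $\Xspl$ (through $\hat\eta$) and $\Xspl'_i$, so the randomness must be decoupled carefully. The cleanest route is to condition on $\Xspl$ throughout, prove the uniform-in-$y$ bound for $\hat\eta$ by VC concentration on ball-indicator events, and only afterwards take expectation over $\Xspl'_i$. A secondary subtlety is the multiclass Tsybakov step, where the suboptimality at a misclassified point must be related to $\eta_{(1)}-\eta_{(2)}$ rather than $|2\eta-1|$; this carries over from the binary case once one notices, as above, that an error forces $\hat\eta_{\bar h(x)}(y_i)\geq \hat\eta_{h^*(x)}(y_i)$ for at least one voter.
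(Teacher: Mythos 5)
Your overall architecture coincides with the paper's: the triangle-inequality split of $\|\hat\eta^\sharp(x)-\eta(x)\|_\infty$ into a uniform $k$-NN regression error plus a subsample displacement term (Proposition \ref{proposition:denoised_NN_reg}, built on Proposition \ref{prop:kNN_reg} and Lemma \ref{lemma:r_k}), the bias--variance balance at $k\asymp n^{2\alpha/(2\alpha+d)}$ up to logs, the Tsybakov conversion that charges $2\phi$ only on the small-margin region (Proposition \ref{prop:agg_clas}), and the final Jensen step to split $(\Delta_{\mathrm{dn}}+\Delta_{\mathrm{sub}})^{\beta+1}$ into the two claimed terms. Your closing remark on decoupling (prove uniformity in the evaluation point, then integrate over $\Xspl_i'$) is also exactly how the paper handles the dependence between $\hat\eta$ and the subsample.

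The genuine gap is in the aggregation step. The theorem needs, with probability $1-L\delta$ \emph{independent of $I$}, the uniform statement $\sup_{x\in\X}\paren{\eta_{h^*(x)}(x)-\eta_{\bar h(x)}(x)}\leq 2\phi$; the Tsybakov calculation then runs deterministically on that event, charging the \emph{magnitude} $2\phi$ at every small-margin point. Your certificate for that magnitude bound --- ``at least one voter $i$ has $\hat\eta^\sharp_{i,\bar h(x)}(x)\geq \hat\eta^\sharp_{i,h^*(x)}(x)$ and $\|\hat\eta^\sharp_i-\eta\|_\infty\leq\Delta$'' --- requires the good event to hold for whichever (random) voter elected $\bar h(x)$, and as written you condition on ``the intersection of these good events,'' i.e.\ all $I$ of them, costing failure probability of order $I\delta$ and reintroducing $I$. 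Your Markov step does not repair this, because it is applied to the wrong event: bounding $\pr{\bar h(x)\neq h^*(x)}\leq 2\,\pr{\hat h_1(x)\neq h^*(x)}$ (i) controls only the misclassification indicator, not the size of the gap at misclassified small-margin points, and (ii) is a pointwise-in-$x$ statement about the randomness of $(\Xspl,\Yspl,\{\Xspl_i'\})$, so it cannot be multiplied into $\expec_X$ as your final display does --- that line mixes a probability over the training randomness with an expectation over $X$ taken on a fixed realization, and at best yields an in-expectation bound rather than the high-probability statement claimed. The paper's Lemma \ref{lemma:error_h_bar} applies the averaging idea to the right event: if $\bar h(x)=l$, at least $I/L$ voters predict $l$, whence
\begin{align*}
\ind{\exists x\in\X:\ \eta_{h^*(x)}(x)-\eta_{\bar h(x)}(x)>2\phi}
\;\leq\; \frac{L}{I}\sum_{i=1}^{I}\ind{\exists x\in\X:\ \|\eta(x)-\hat\eta^\sharp_i(x)\|_\infty>\phi}
\end{align*}
after swapping $\sup_x$ with the sum; integrating over the data and the conditionally independent subsamples gives failure probability $L\delta$, uniformly in $I$. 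This is also where the factor $L$ (rather than your $2$) necessarily enters: with $L>2$ classes the plurality winner is only guaranteed $I/L$ votes, and it is votes \emph{for} $\bar h(x)$ --- not votes against $h^*(x)$, which is all your (otherwise correct) majority observation provides --- that certify the gap bound. With your aggregation step replaced by this argument, the rest of your plan goes through and reproduces the paper's proof.
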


The first term above is a function of the size $n$ of the original sample, and recovers the recent optimal bounds for $k$-NN classification of \cite{chaudhuri2014rates}. We note however that the result of \cite{chaudhuri2014rates} concerns binary classification, while here we consider the more general setting of multiclass. Matching lower bounds were established earlier in \citep{audibert2007fast}. 

The second term, a function of the subsample size $m$, characterizes the additional error (over vanilla $k$-NN $h_{\hat \eta}$) due to subsampling and due to using $1$-NN's at prediction time. As discussed earlier in the introduction, the first term dominates (i.e. we recover the same rates as for $k$-NN) whenever the subsampling ratio 
$(m/n) = \tilde \Omega(n^{-\alpha/(2\alpha + d)})$ which goes to $0$ as $n\to \infty$. This is remarkable in that it suggests smaller subsample sizes are sufficient (for good accuracy) in the large sample regimes motivating the present work. We will see later that this is supported by experiments.  

As mentioned earlier, similar vanishing subsampling ratios were shown for \emph{bagged} $1$-NN in \citep{biau2010layered, biau2010rate, samworth2012optimal}, but assuming a infinite number of subsamples. 
In contrast the above result holds for any number of subsamples, and the improvements over $1$-NN are supported in 
experiments over varying number of subsamples, along with varying subsampling ratios. 

The main technicalities and insights in establishing Theorem \ref{theo:main} are discussed in Section \ref{sec:analysis} below, with some proof details relegated to the appendix. 

\section{ANALYSIS OVERVIEW}
\label{sec:analysis}
The proof of Theorem \ref{theo:main} combines the statements of Propositions \ref{prop:agg_clas} and \ref{proposition:denoised_NN_reg} below. The main technicality involved is in establishing 
Proposition \ref{prop:agg_clas} which brings together the effect of noise margin $\beta$, smoothness $\alpha$, and 
the overall error due to denoising over a subsample. We overview these supporting results in the next subsection, followed 
by the proof of Theorem \ref{theo:main}. 

\subsection{Supporting Results}
Theorem \ref{theo:main} relies on first establishing a rate of convergence for the $k$-NN regression estimate $\hat \eta$, used in denoising the subsamples. While such rates exist 
in the literature under various assumptions (see e.g. \cite{SK:78}), we require a \emph{high-probability} rate that holds \emph{uniformly} over all $x\in \X$.  This is given in Proposition \ref{prop:kNN_reg} below, and is established for our particular setting where $Y$ takes discrete multiclass values (i.e. $\hat \eta$ and $\eta$ are both multivariate functions).  
Its proof follows standard techniques adapted to our particular aim, and is given in the appendix (supplementary material).

\begin{proposition}[Uniform $k$-NN regression error]\label{prop:kNN_reg}
      	Let $0<\delta<1$. Let $\hat \eta$ denote the $k$-NN regression estimate of  
 Definition \ref{def:kNN}. The following holds for a choice of $k = O\left( \left(\ln\frac{nL}{\delta}\right)^{\frac{d}{2\alpha + d} } (nC_d)^\frac{2\alpha}{2\alpha + d} \right)$. With probability at least $1-2\delta$ over $\bf (X,Y)$, we have simultaneously for all $x\in{\cal X}$:
      	$$\| \hat{\eta}(x) - \eta(x) \|_\infty \leq C \left(\frac{{\cal V} \ln (nL/\delta)}{nC_d} \right)^{\frac{\alpha}{2\alpha + d}}$$
      	where $C$ is a function of $\alpha$, $\lambda$.
\end{proposition}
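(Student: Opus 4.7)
The plan is to decompose $\hat\eta_l(x) - \eta_l(x)$, for each class $l \in [L]$ and uniformly in $x \in \X$, into a bias term and a variance term, and to bound each via standard VC arguments for the class of balls on $\X$ (whose VC dimension is $\V$). Introducing the regression-function-averaged quantity $\bar\eta_l(x) \doteq k^{-1} \sum_{i \in k\text{NN-I}(x)} \eta_l(X_i)$, I write
\[
\hat\eta_l(x) - \eta_l(x) = \bigl[\hat\eta_l(x) - \bar\eta_l(x)\bigr] + \bigl[\bar\eta_l(x) - \eta_l(x)\bigr].
\]
The bias $|\bar\eta_l(x) - \eta_l(x)|$ is immediately controlled by Assumption \ref{Assp_smooth} as $\lambda\, r_k(x)^\alpha$, so bias control reduces to a uniform upper bound on $r_k(x)$.

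The uniform bound on $r_k(x)$ follows from a relative-deviation VC inequality applied to the class of balls: with probability at least $1-\delta$, every ball whose true $P_X$-mass exceeds a threshold of order $\V\ln(n/\delta)/n$ carries at least half that mass empirically. Combined with Assumption \ref{Assp_int_dim}, this implies that any ball of radius $r$ with $C_d r^d$ of order $(k + \V\ln(n/\delta))/n$ must contain at least $k$ sample points, so uniformly $r_k(x)^d = O\bigl((k + \V\ln(n/\delta))/(nC_d)\bigr)$; for $k$ at least of order $\V\ln(n/\delta)$ this simplifies to $r_k(x)^d = O(k/(nC_d))$.

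For the variance term, condition on $\Xspl$: the labels are independent, and $\hat\eta_l(x) - \bar\eta_l(x) = k^{-1}\sum_{i \in k\text{NN-I}(x)} \bigl[\ind{Y_i = l} - \eta_l(X_i)\bigr]$ is an average of $k$ bounded centered random variables. The subtle point is that $k\text{NN-I}(x)$ is itself $\Xspl$-dependent, so pointwise Hoeffding cannot be combined with a simple union bound over $x$. To circumvent this, note that $k\text{NN-I}(x)$ is exactly the set of indices of points falling in $B(x, r_k(x))$, and apply a uniform concentration inequality to the centered process $B \mapsto \sum_i \ind{X_i \in B}(\ind{Y_i = l} - \eta_l(X_i))$ indexed by the VC class of balls, then union-bound over $l \in [L]$. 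This yields, uniformly in $x$ and $l$,
\[
\bigl|\hat\eta_l(x) - \bar\eta_l(x)\bigr| = O\!\left(\sqrt{\V\ln(nL/\delta)/k}\right).
\]

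Combining, $\|\hat\eta(x) - \eta(x)\|_\infty = O\bigl((k/(nC_d))^{\alpha/d}\bigr) + O\bigl(\sqrt{\V\ln(nL/\delta)/k}\bigr)$ uniformly; balancing the two terms yields the stated $k \asymp (nC_d)^{2\alpha/(2\alpha+d)}\bigl(\ln(nL/\delta)\bigr)^{d/(2\alpha+d)}$ and the announced excess-error rate. The main obstacle is the variance step: handling the data-dependence of $k\text{NN-I}(x)$ correctly requires a uniform-over-balls concentration argument rather than the naive pointwise Hoeffding; once this is in place, the rest of the proof is routine bookkeeping of the Hölder and VC constants.
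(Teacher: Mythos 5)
Your proposal is correct and follows essentially the same route as the paper: the same bias--variance decomposition around $\tilde{\eta}(x) = k^{-1}\sum_{i \in k\text{NN-I}(x)}\eta(X_i)$, the same relative-deviation VC bound over balls combined with Assumption \ref{Assp_int_dim} to get the uniform $r_k(x)$ bound (Lemma \ref{lemma:r_k}), and the same resolution of the data-dependence of $k\text{NN-I}(x)$ in the variance term --- the paper implements your ``uniform concentration over the VC class of balls'' by conditioning on $\Xspl$, applying Hoeffding to each ball-induced subset of the sample at deviation scale $\sqrt{k}$, and union-bounding over the at most $(en/\V)^{\V}$ such subsets via Sauer's lemma (Lemma \ref{lemma:knn_vars}), which is exactly what your asserted $O\bigl(\sqrt{\V\ln(nL/\delta)/k}\bigr)$ rate requires. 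The final balancing of the two terms to choose $k$ matches the paper's conclusion verbatim.
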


The above statement is obtained, by first remarking that, under structural assumptions on $P_X$, (namely that there is sizable mass everywhere locally on $\X$), nearest neighbor distances can be uniformly bounded with high-probability. 
Such nearest neighbor distances control the bias of the $k$-NN estimator, while its variance behaves like $O(1/k)$. 

Such a uniform bound on NN distances is given in Lemma \ref{lemma:r_k} below and follows standard insights. 

\begin{lemma}[Uniform bound on NN distances $r_k$ ] \label{lemma:r_k}
As in Definition \ref{def:kNN}, let $r_k(x)$ denote the distance from $x\in \X$ to its $k$'th nearest neighbor in a 
sample $\Xspl \sim P_X^n$. Then, with probability at least $1-\delta$ over $\Xspl$, the following holds for all $k\in [n]$:
      	  $$\sup_{x \in {\cal X}} r_k(x) \leq \left(\frac{3}{C_d}\right)^{\frac{1}{d}} \cdot  \max \left(\frac{k}{n}, \frac{{\cal V}\ln 2n + \ln\frac{8}{\delta}}{n} \right)^\frac{1}{d}.$$
\end{lemma}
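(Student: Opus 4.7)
My plan is to reduce the statement about $r_k$ to a uniform lower bound on the empirical mass of balls, and then invoke a standard VC-type relative-deviation inequality for the class of Euclidean balls.

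First I would observe the elementary equivalence
\[
r_k(x)\le r\iff \abs{\{i\le n:X_i\in B(x,r)\}}\ge k\iff P_n(B(x,r))\ge k/n,
\]
where $P_n$ is the empirical measure of the sample $\Xspl$. Hence, to bound $\sup_x r_k(x)$, it suffices to find a radius $r=r_n(k)$ such that, with high probability, every ball of that radius contains at least $k$ sample points. With the candidate choice
\[
r_n(k)^d=\frac{3}{C_d}\cdot\max\Bigl(\frac{k}{n},\alpha_n\Bigr),\qquad\alpha_n\doteq\frac{\V\ln 2n+\ln(8/\delta)}{n},
\]
Assumption \ref{Assp_int_dim} guarantees $P_X(B(x,r_n(k)))\ge C_d\,r_n(k)^d=3\max(k/n,\alpha_n)$ for every $x\in\X$.

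The next step is the VC concentration. The class $\B$ of Euclidean balls on $\X$ has VC dimension $\V$, so by Sauer's lemma its shatter coefficient is at most $(2n)^\V$. I would apply the Vapnik–Chervonenkis relative-deviation inequality (in the form used e.g. in the proof of Lemma 7 of \citet{chaudhuri2014rates}) which gives that, with probability at least $1-\delta$, simultaneously for every $B\in\B$,
\[
P_X(B)-P_n(B)\le\sqrt{P_X(B)\,\alpha_n}\,+\,\alpha_n
\]
(up to harmless constants folded into $\alpha_n$). Combined with $P_X(B(x,r_n(k)))\ge 3\alpha_n$, a routine algebraic manipulation of this inequality yields $P_n(B(x,r_n(k)))\ge\tfrac{1}{3}P_X(B(x,r_n(k)))\ge\max(k/n,\alpha_n)\ge k/n$. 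Crucially, the VC bound is uniform over \emph{all} balls, hence the same good event handles the balls $B(x,r_n(k))$ for every $x\in\X$ and every $k\in[n]$ simultaneously.

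Putting the two pieces together on the VC-good event yields $r_k(x)\le r_n(k)$ uniformly in $x$ and $k$, which is exactly the claimed bound. The main obstacle is purely bookkeeping: fitting the numerical constants coming out of the relative VC inequality into the clean factor of $3$ appearing under the maximum. I would handle this by choosing the form of the VC bound (e.g.\ the symmetric one $P_X(B)\le 2P_n(B)+2\alpha_n$ versus the square-root form) so that the threshold $3\alpha_n$ on $P_X(B)$ suffices; the factor $3^{1/d}$ in the stated radius then matches exactly. No extension of $k$ beyond $k=1,\dots,n$ is needed, since the VC event is the same for all $k$.
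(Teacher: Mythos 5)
Your route is correct in outline but runs in the opposite direction from the paper's, and the direction matters for the constant. The paper applies its Lemma~\ref{lemma:rela_VC} --- which bounds the \emph{true} mass by the \emph{empirical} mass, $\mathbb{P}(B)\le \mathbb{P}_n(B)+\sqrt{\mathbb{P}_n(B)\alpha_n}+\alpha_n\le 3\max\paren{\mathbb{P}_n(B),\alpha_n}$ --- to the data-dependent balls $B(x,(1-\epsilon)r_k(x))$, whose empirical mass is below $k/n$ by definition of $r_k$, and then lets $\epsilon\to0$; the factor $3$ falls out immediately. You instead fix the deterministic radius $r_n(k)$ and lower-bound the empirical mass of every such ball, which requires the \emph{other} tail of the relative deviation inequality, $\mathbb{P}(B)-\mathbb{P}_n(B)\le\sqrt{\mathbb{P}(B)\alpha_n}+\alpha_n$. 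That tail is standard (it is indeed in Lemma~7 of \citet{chaudhuri2014rates}) but is not what the paper's Lemma~\ref{lemma:rela_VC} supplies; your version is otherwise a clean alternative and even avoids the $\epsilon\to0$ device on data-dependent radii.

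The genuine gap is the claimed algebra. From $\mathbb{P}(B)\ge 3\max\paren{k/n,\alpha_n}$ and the form you quote, one gets only $\mathbb{P}_n(B)\ge \mathbb{P}(B)\paren{1-\tfrac{1}{\sqrt 3}-\tfrac 13}\approx 0.089\,\mathbb{P}(B)\approx 0.27\,k/n$, \emph{not} $\mathbb{P}_n(B)\ge\tfrac13\mathbb{P}(B)\ge k/n$. With threshold $c\max(k/n,\alpha_n)$ the requirement is $c-\sqrt c-2\ge 0$, i.e.\ $c\ge 4$; and the symmetric fallback $\mathbb{P}(B)\le 2\mathbb{P}_n(B)+2\alpha_n$ that you propose gives $\mathbb{P}_n(B)\ge \mathbb{P}(B)(c-2)/(2c)$, which likewise needs $c=4$, so neither of your suggested forms recovers the stated factor. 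The factor $3$ can be rescued with the pure relative bound $\mathbb{P}(B)-\mathbb{P}_n(B)\le\sqrt{\mathbb{P}(B)\alpha_n}$ (since $3-\sqrt3-1>0$), but then the constants inside $\alpha_n$ no longer match the lemma's $(\V\ln 2n+\ln(8/\delta))/n$. In short: your argument proves the lemma with $(4/C_d)^{1/d}$ in place of $(3/C_d)^{1/d}$, which is harmless for everything downstream, but the ``routine algebraic manipulation'' as stated is false, and the paper's choice of which tail to use is precisely what makes the constant $3$ come out for free.
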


\subsection{SubNN Convergence}
We are ultimately interested in the particular regression estimates induced by subsampling: the denoised $1$-NN estimates $\hat h$ over a subsample can be viewed as 
$\hat h = \argmax_{l \in [L]} {\hat \eta^\sharp_l}$ for a regression estimate $\hat \eta^\sharp (x) \doteq \hat{\eta}(\text{NN}(\Xspl'; x))$, i.e., $\hat \eta$ evaluated at the nearest neighbor $\text{NN}(\Xspl'; x)$ of $x$ in $\Xspl'$. Our first step is to relate the error of $\hat \eta^\sharp$ to that of $\hat \eta$. Here again the bound on NN distances of Lemma \ref{lemma:r_k} above comes in handy since $\hat \eta^\sharp$ can be viewed as 
introducing additional bias to $\hat \eta$, a bias which is in turn controlled by the distance from a query $x$ to its NN in the subsample $\Xspl'$. By the above lemma, this distance is of order $\tilde{O}(m^{-1/d})$, introducing a bias of order 
$\tilde{O}(m^{-\alpha/d})$ given the smoothness of $\eta$. 

Thus, combining the above two results yields the following regression error on denoised estimates.

\begin{proposition}[Uniform convergence of denoised $1$-NN regression] \label{proposition:denoised_NN_reg}
      	Let $0<\delta<1$. Let $\hat \eta$ denote the $k$-NN regression estimate of  
 Definition \ref{def:kNN}. Let $\Xspl'$ denote a subsample (without replacement) of $\Xspl$. Define 
 the \emph{denoised} $1$-NN estimate $\hat \eta^\sharp (x) \doteq \hat{\eta}(\text{NN}(\Xspl'; x))$. 
The following holds for a choice of $k = O\left( (\V\ln\frac{nL}{\delta})^{\frac{d}{2\alpha + d} } (nC_d)^\frac{2\alpha}{2\alpha + d} \right)$. With probability at least $1-3\delta$ over $\bf (X,Y)$, and $\Xspl'$, we have simultaneously for all $x\in{\cal X}$:

  \begin{align*}
    \|\hat \eta^\sharp (x) - \eta(x) \|_\infty 
    \leq& C \left(\frac{\V\ln (nL/\delta)}{nC_d} \right)^{\frac{\alpha}{2\alpha + d}} \\
    &+ C \left(\frac{\V \ln(m/\delta)}{mC_d} \right)^\frac{\alpha}{d} \text{,}
  \end{align*}
    where $C$ is a function of $\alpha$, $\lambda$. 
\end{proposition}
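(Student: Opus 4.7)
The plan is to split the error via triangle inequality into a \emph{denoising} component (how well $\hat\eta$ estimates $\eta$ at the nearest neighbor point) and a \emph{displacement} component (how much $\eta$ changes between $x$ and its nearest neighbor in the subsample $\Xspl'$). Specifically, writing $\xt = \text{NN}(\Xspl'; x)$, I would use
\[
\|\hat \eta^\sharp(x) - \eta(x)\|_\infty \leq \|\hat{\eta}(\xt) - \eta(\xt)\|_\infty + \|\eta(\xt) - \eta(x)\|_\infty.
\]
The first term is uniform in $\xt \in \X$, so Proposition \ref{prop:kNN_reg} applies directly with the stated choice of $k$, yielding the $\tilde O((nC_d)^{-\alpha/(2\alpha+d)})$ bound on an event of probability at least $1 - 2\delta$.

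For the second term, I would invoke the H\"older smoothness of Assumption \ref{Assp_smooth}, which bounds it by $\lambda \|x - \xt\|^\alpha$. To control $\|x - \xt\|$ uniformly over $x\in\X$, I would apply Lemma \ref{lemma:r_k} to the subsample $\Xspl'$ with $k=1$. The key observation needed to justify this is that a uniformly random subsample (without replacement) of an i.i.d.\ sample from $P_X$ is itself i.i.d.\ $P_X^m$, so Lemma \ref{lemma:r_k} applies verbatim with $n$ replaced by $m$. This yields $\sup_{x\in\X} \|x - \xt\| \leq (3/C_d)^{1/d} \cdot (\V\ln(2m) + \ln(8/\delta))^{1/d} m^{-1/d}$ on an event of probability at least $1-\delta$, and raising to the power $\alpha$ gives the second term of the stated bound, after absorbing constants into $C$.

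Finally, a union bound over the two events (one over $(\Xspl,\Yspl)$ controlling $\hat\eta$, one over the subsampling $\Xspl'$ controlling NN distances) gives the overall probability $1 - 3\delta$. The main subtlety I expect is precisely the justification that Lemma \ref{lemma:r_k} carries over to the subsample, since $\Xspl'$ is drawn without replacement from a data-dependent pool rather than fresh from $P_X$; but because the subsample indices are chosen independently of $(\Xspl,\Yspl)$ and uniformly, marginalizing over that independent randomness preserves the i.i.d.\ structure, and the event in Lemma \ref{lemma:r_k} is defined only in terms of the distribution of the subsample points. Everything else is routine chaining of the triangle inequality with H\"older continuity.
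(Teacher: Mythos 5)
Your proposal is correct and takes essentially the same route as the paper's own proof: the identical triangle-inequality decomposition at $x' = \text{NN}(\Xspl'; x)$, Proposition \ref{prop:kNN_reg} for the denoising term, and Lemma \ref{lemma:r_k} applied to the subsample (viewed as $m$ i.i.d.\ draws from $P_X$) combined with the H\"older condition of Assumption \ref{Assp_smooth} for the displacement term, finishing with the same union bound yielding probability $1-3\delta$. The subtlety you flag---that a without-replacement subsample with indices chosen independently of the data is marginally i.i.d.\ $P_X^m$---is precisely the observation the paper invokes, so nothing is missing.
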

\begin{proof}
	Define $x' = \text{NN}(\Xspl'; x)$ so that $\hat{\eta}^\sharp(x) = \hat{\eta}(x')$. We then have the two parts decomposition: 
	\begin{align}
	\|&\hat{\eta}^\sharp(x)  - \eta(x)\|_\infty 
	 =   \|\hat{\eta}(x') - \eta(x)\|_\infty \nonumber \\
	&\leq \|\hat{\eta}(x') - \eta(x')\|_\infty + \|\eta(x') - \eta(x)\|_\infty \nonumber\\
	&\leq C \left(\frac{{\cal V}\ln (nL/\delta)}{nC_d} \right)^{\frac{\alpha}{2\alpha + d}} + \|\eta(x') - \eta(x)\|_\infty, 
	\label{eq:sub1}
	\end{align}
where the last inequality follows (with probability $1-2\delta$) from Proposition \ref{prop:kNN_reg}. 
	
To bound the second term in inequality \eqref{eq:sub1}, notice that $\Xspl'$ can be viewed as $m$ i.i.d. samples from $P_X$. Therefore $\|x' - x\|$ can be bounded using Lemma \ref{lemma:r_k}. Therefore by smoothness condition on $\eta$ in Assumption \ref{Assp_smooth}, we have with probability at least $1-\delta$, simultaneously for all $x\in \X$:
	\begin{align}
	\|&\eta(x') - \eta(x)\|_\infty \leq \lambda \|x' - x\|^{\alpha} \nonumber \\
	  &\leq \lambda \left(\frac{3{\cal V}\ln 2m + 3\ln\frac{8}{\delta}}{m C_d} \right)^\frac{\alpha}{d}
	  \leq C \left(\frac{{\cal V} \ln \frac{m}{\delta}}{mC_d} \right)^\frac{\alpha}{d}.
	  \label{eq:sub2}
	\end{align}
	
	Combining (\ref{eq:sub1}) and (\ref{eq:sub2}) yields the statement.
\end{proof}

Next we consider \emph{aggregate regression error}, i.e., 
the discrepancy $(\eta_{h^*(x)}(x) - \eta_{\bar h(x)}(x))$ between the coordinates of $\eta$ given by the labels $h^*(x)$ and $\bar h(x)$. This will be bounded in terms of the error $\phi(n, m)$ attainable by the individual denoised regression estimates (as bounded in the above proposition). 

\begin{lemma} [Uniform convergence of aggregate regression] \label{lemma:error_h_bar}
Given independent subsamples $\braces{\Xspl_i'}_{i = 1}^I$ from $(\Xspl, \Yspl)$, define 
$\hat \eta^\sharp_i(x) = \hat \eta (\text{NN}(\Xspl_i'; x))$, i.e., the regression estimate $\hat \eta$ evaluated at 
the nearest neighbor $\text{NN}(\Xspl_i'; x)$ of $x$ in $\Xspl'_i$. 
Suppose there 
exists $\phi = \phi(n, m)$ such that, 
$$ \max_{i \in [I]}\, \prf{\Xspl, \Yspl, \Xspl_i'}{\exists x \in \X, \, \|\hat \eta^\sharp_i(x) - \eta(x)\|_\infty > \phi} \leq \delta, $$
for some $0< \delta < 1$. 
Then, let $\bar h$ denote the subNN estimate using subsamples $\braces{\Xspl_i'}_{i = 1}^I$.  With probability at least $1-L\delta$ over the randomness in $\bf (X,Y)$ and $\braces{\Xspl_i'}_{1}^I$, the following holds simultaneously for all $x\in{\cal X}$:
	\begin{align*}
	&\eta_{h^*(x)}(x) - \eta_{\bar h(x)}(x) \leq 2\phi. 
	\end{align*}
\end{lemma}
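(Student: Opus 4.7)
I will convert the $\ell_\infty$-uniform control on each individual denoised regressor, $\|\hat\eta^\sharp_i - \eta\|_\infty \leq \phi$, into a uniform bound on the aggregate discrepancy $\eta_{h^*(x)}(x) - \eta_{\bar h(x)}(x)$. The plan is to bound, separately for each label $l \in [L]$, the probability of the event
\[ F_l \doteq \braces{\exists x \in \X :\; \bar h(x) = l \text{ and } \eta_{h^*(x)}(x) - \eta_l(x) > 2\phi}, \]
and then to union bound over $l$. Denote by $E_i \doteq \braces{\exists x \in \X : \|\hat\eta^\sharp_i(x) - \eta(x)\|_\infty > \phi}$ the bad event for the $i$-th subsample, so that $\pr{E_i} \leq \delta$ by hypothesis.

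\textbf{Deterministic reduction.} First I will show that whenever $F_l$ occurs, at least $I/2$ of the $E_i$'s must occur. Fix any $x$ witnessing $F_l$. Since $\bar h(x) = l$ is produced by a majority vote, at least $I/2$ indices $i$ satisfy $\hat h_i(x) = l$, equivalently $\hat\eta^\sharp_{i,l}(x) \geq \hat\eta^\sharp_{i,h^*(x)}(x)$. For any such $i$, if $E_i$ did \emph{not} occur then uniform accuracy at this very $x$ would yield
\[ \eta_l(x) \geq \hat\eta^\sharp_{i,l}(x) - \phi \geq \hat\eta^\sharp_{i,h^*(x)}(x) - \phi \geq \eta_{h^*(x)}(x) - 2\phi, \]
contradicting the defining inequality of $F_l$. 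Hence every such $i$ must lie in $E_i$, and so at least $I/2$ of the $E_i$'s occur.

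\textbf{Probabilistic conclusion.} The final step is Markov's inequality, which eliminates the $I$-dependence:
\[ \pr{F_l} \;\leq\; \pr{\sum_{i=1}^I \ind{E_i} \geq I/2} \;\leq\; \frac{2}{I} \sum_{i=1}^I \pr{E_i} \;\leq\; 2\delta. \]
This is the averaging effect highlighted in the overview: enlarging $I$ simultaneously raises both the number of bad subsamples needed to flip the majority and the pool of subsamples available, so the two effects cancel. A union bound over $l \in [L]$ (the case $l = h^*(x)$ is vacuous) then yields the claim with probability at least $1 - 2L\delta$, matching the stated bound up to an absorbable constant.

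\textbf{Main obstacle.} The substantive step is the deterministic reduction: it uses that $E_i^c$ controls \emph{all} $L$ coordinates of $\hat\eta^\sharp_i$ simultaneously at the (random) witness $x$, which is precisely why the hypothesis must be uniform in $x$. The witness $x$ is itself a function of all the subsamples, so a merely pointwise or in-expectation bound on $\|\hat\eta^\sharp_i - \eta\|_\infty$ would not close the argument; everything else is bookkeeping around the majority-vote count and a one-line Markov estimate.
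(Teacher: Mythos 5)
There is a genuine gap in your deterministic reduction: you claim that $\bar h(x) = l$ forces at least $I/2$ of the submodels to vote $l$. That holds only in the binary case. Lemma \ref{lemma:error_h_bar} is stated for the multiclass setting, where $\bar h(x)$ is the plurality label among $\{\hat h_i(x)\}_{i=1}^I$ (Definition \ref{def:subNN}), and a plurality winner among $L > 2$ labels can receive far fewer than $I/2$ votes --- e.g.\ $L = 3$, $I = 9$, vote counts $4,3,2$: the winner has $4 < I/2$. The correct pigeonhole count, which the paper uses, is that the winning label receives at least $I/L$ votes, i.e.\ $\frac{L}{I}\sum_{i=1}^{I}\ind{\hat h_i(x) = l} \geq 1$. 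With the corrected threshold your Markov step gives
\begin{align*}
\pr{F_l} \;\leq\; \pr{\sum_{i=1}^I \ind{E_i} \geq I/L} \;\leq\; \frac{L}{I}\sum_{i=1}^I \pr{E_i} \;\leq\; L\delta,
\end{align*}
and your subsequent union bound over $l \in [L]$ then costs $L^2\delta$, overshooting the stated $L\delta$ (your claimed $2L\delta$ rested on the invalid $I/2$ count).

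The repair is to drop the per-label stratification, which is exactly what the paper does. At any witness $x$ of the single bad event $\braces{\exists x \in \X:\ \eta_{h^*(x)}(x) - \eta_{\bar h(x)}(x) > 2\phi}$, set $l = \bar h(x)$, whatever label that happens to be; your two-sided comparison $\eta_l(x) \geq \hat\eta^\sharp_{i,l}(x) - \phi \geq \hat\eta^\sharp_{i,h^*(x)}(x) - \phi \geq \eta_{h^*(x)}(x) - 2\phi$ is correct and identical to the paper's, and it shows that each of the $\geq I/L$ submodels voting $l$ must have its event $E_i$ occur. Hence pointwise $\ind{\text{bad event}} \leq \frac{L}{I}\sum_{i=1}^I \ind{E_i}$, and taking expectations yields $L\delta$ in one shot; the paper carries this out by bounding indicators, passing the supremum over $x$ inside the sum, and integrating. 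Your remaining observations are sound and match the paper's remark: Markov's inequality is what makes the bound independent of $I$, uniformity in $x$ is indispensable because the witness depends on all subsamples, and (as in your write-up) conditional independence of the subsamples is never actually needed for this step --- linearity of expectation suffices.
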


\emph{Remark.} Notice that in the above statement, the probability of error goes from $\delta$ to $L\delta$, but does not depend on the number $I$ of submodels. This is because of the \emph{averaging} effect of the majority vote. For intuition, suppose $B$ is a \emph{bad event} and $\mathbf{1}_{B_i}$ is whether $B$ happens for submodel $i$. Suppose further that $\expec \mathbf{1}_{B_i} \leq \delta$ for all $i$. Then the likelihood of $B$ happening for a majority of more than $I/2$ of models is 
$$\expec \ind{ {\sum_i \mathbf{1}_{B_i} \geq I/2} } \leq \frac{2}{I} \cdot {\expec \sum_i \mathbf{1}_{B_i}} \leq 2\delta,$$
by a Markov inequality. We use this type of intuition in the proof, however over a sequence of related \emph{bad} events, and using the fact that, the submodels' estimates are independent conditioned on $\Xspl, \Yspl$. 

\begin{proof}
	The result is obtained by appropriately bounding the indicator 
$\ind{ \exists x: \eta_{h^*(x)}(x) - \eta_{\bar h(x)}(x) > 2\phi }.$
	
Let $\hat h_i$ denote the denoised $1$-NN classifier on sample $\Xspl_i'$, or for short, the $i$'th submodel. First notice that, if the majority vote $\bar h(x) \doteq l$ for some label $l\in [L]$, then at least $I/L$ submodels $\hat h_i(x)$ predict $l$ at $x$. In other words, we have  
	\begin{align*}
	 \frac{L}{I} \sum_{i=1}^{I} \ind{\hat h_i(x) = l} \geq 1.
	\end{align*}
	
	Therefore, fix $x \in \cal X$, and let $\bar h(x) \doteq l$; we then have:
	\begin{align}
	\mathbf{1} \, \big \{ &\eta_{h^*(x)}(x) - \eta_{\bar h(x)}(x) > 2\phi  \big \} \nonumber\\
		&\doteq \ind{ \eta_{h^*(x)}(x) - \eta_{l}(x) > 2\phi } \nonumber\\
	    &\leq \frac{L}{I} \sum_{i=1}^{I} \ind{\hat h_i(x) = l} \ind{\eta_{h^*(x)}(x) - \eta_{l}(x) > 2\phi } \nonumber\\
		&\leq \frac{L}{I} \sum_{i=1}^{I} \ind{\eta_{h^*(x)}(x) - \eta_{\hat h_i(x)}(x) > 2\phi }. \label{line:lemma5:ind:12345} 
	\end{align}
	
	We bound the above as follows. Suppose $\hat h_i(x) \doteq l_i$ and $h^*(x) \doteq l^*$ for some labels 
	$l_i$ and $l^*$ in $[L]$. 
	Now, if $\| \eta(x) - \hat\eta^\sharp_i (x) \|_\infty \leq \phi$, then $\eta_{l^*}(x) \leq \hat\eta^\sharp_{i,l^*} (x) + \phi$ and $\eta_{l_i}(x) \geq \hat\eta^\sharp_{i,l_i} (x) - \phi$. Also by definition we know that $\hat h_i(x)$ is the maximum entry of $\hat\eta^\sharp_i (x)$, so $\hat\eta^\sharp_{i,l^*} (x) \leq \hat\eta^\sharp_{i,l_i} (x)$. Therefore
	 \begin{align*} 
	 \eta_{h^*(x)}(x) &- \eta_{\hat h_i(x)}(x) \leq (\hat\eta^\sharp_{i,l^*} (x) + \phi) - 
	 (\hat\eta^\sharp_{i,l_i} (x) - \phi)\\
	 &=  (\hat\eta^\sharp_{i,l^*} (x) - \hat\eta^\sharp_{i,l_i} (x)) + 2\phi \leq 2\phi.
	 \end{align*} 
	In other words, ${\eta_{h^*(x)}(x) - \eta_{\hat h_i(x)}(x) > 2\phi } $ only when $\| \eta(x) - \hat\eta^\sharp_i (x) \|_\infty > \phi$. Thus, bound (\ref{line:lemma5:ind:12345}) to obtain:
	\begin{align*}
	  & \ind{\eta_{h^*(x)}(x) - \eta_{\bar h(x)}(x) > 2\phi } \\
	  &\leq \frac{L}{I} \sum_{i=1}^{I} \ind{\| \eta(x) - \hat\eta^\sharp_i (x) \|_\infty > \phi }.
	\end{align*}
	
	Finally we use the fact that, for an event $A(x)$, we have $\ind{\exists x \in {\cal X}: A(x)} = \sup_{x \in {\cal X}} \ind{A(x)}$. Combine this fact with the above inequality to get:
	\begin{align*}
	& \ind{\exists x \in {\cal X}: \eta_{h^*(x)}(x) - \eta_{\bar h(x)}(x) > 2\phi } \\
	& = \sup_{x \in {\cal X}} \ind{\eta_{h^*(x)}(x) - \eta_{\bar h(x)}(x) > 2\phi } \\
	& \leq \sup_{x \in {\cal X}} \frac{L}{I} \sum_{i=1}^{I} \ind{\| \eta(x) - \hat\eta^\sharp_i (x) \|_\infty > \phi } \\
	& \leq \frac{L}{I} \sum_{i=1}^{I} \sup_{x \in {\cal X}} \ind{\| \eta(x) - \hat\eta^\sharp_i (x) \|_\infty > \phi } \\
	& = \frac{L}{I} \sum_{i=1}^{I} \ind{\exists x \in {\cal X}: \| \eta(x) - \hat\eta^\sharp_i (x) \|_\infty > \phi }.
	\end{align*}
The final statement is obtained by integrating both sides of the inequality over the randomness in $\Xspl, \Yspl$ and the (conditionally) independent subsamples $\braces{\Xspl_i'}_{1}^I$. 
\end{proof}

{Next, Proposition \ref{prop:agg_clas} below states that the excess error of the subNN estimate $\bar h$ can be bounded in terms of the aggregate \emph{regression} error $(\eta_{h^*} - \eta_{\bar h})$ considered in Lemma \ref{lemma:error_h_bar}. In particular, the proposition serves to account for the effect of the 
\emph{noise margin} parameter $\beta$ towards obtaining faster rates than those in terms of smoothness $\alpha$ only. } 
 
%
%
%
%

\begin{table*}[t]
	\begin{center}
		\small
		\caption{Datasets Used In Evaluating SubNN}
		\label{tab:data_description}
		\begin{tabular}{|c|c|c|c|c|c|}
			\hline
			Name		& \#train	& \#test	& \#dimension	& \#classes	& Description 	\\\hline
			MiniBooNE	& 120k 		& 10k		& 50			& 2			& Particle identification \citep{UCI-MiniBooNE} 	\\\hline
			TwitterBuzz	& 130k 		& 10k		& 50			& 2			& Buzz in social media \citep{UCI-TwitterBuzz}	\\\hline
			LetterBNG	& 34k 		& 10k		& 16			& 26		& English alphabet \citep{OpenML-LetterBNG} \\\hline
			NewsGroups20& 11k		& 7.5k		& 130k			& 20		& Document classification \citep{UCI-NewsGroups20} \\\hline
			YearPredMSD	& 34k		& 10k		& 90			& regression& Release year of songs	\citep{UCI-YearPredMSD}	\\\hline
			WineQuality	& 5.5k		& 1.0k		& 12			& regression& Quality of wine \citep{UCI-WineQuality} 	\\\hline
		\end{tabular}
	\end{center}
\end{table*}

\begin{proposition} \label{prop:agg_clas}
	Suppose there exists $\phi = \phi(n, m)$ such that, 
	with probability at least $1- \delta$ over the randomness in $(\Xspl, \Yspl)$ and the subsamples $\braces{\Xspl_i'}_{i = 1}^I$, we have simultaneously for all $x\in \X$, 
	$ \eta_{h^*(x)}(x) - \eta_{\bar h(x)}(x) < 2\phi$. 
	
	Then with probability at least $1-\delta$, the excess classification error of the estimate $\bar{h}$ satisfies:
	$$ \text{err}( \bar{h} ) - \text{err}(h^*) \leq C_\beta (2\phi)^{\beta+1}.$$
\end{proposition}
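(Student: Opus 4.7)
}
The plan is to run the standard Tsybakov-margin style argument, conditioning on the high-probability event given in the hypothesis. First I would rewrite the excess classification error in the usual way via the regression function. Since, for any classifier $h$,
\begin{align*}
\err{h} - \err{h^*} &= \expecf{X}\bigl[\eta_{h^*(X)}(X) - \eta_{h(X)}(X)\bigr] \\
&= \expecf{X}\bigl[(\eta_{h^*(X)}(X) - \eta_{h(X)}(X))\,\ind{h(X)\neq h^*(X)}\bigr],
\end{align*}
applying this to $h = \bar h$ reduces the task to controlling the integrand pointwise.

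Next I would exploit the structure of $h^*$: for any $x$ with $\bar h(x)\neq h^*(x)$, the label $\bar h(x)$ is some non-maximizer of $\eta(x)$, so $\eta_{\bar h(x)}(x) \leq \eta_{(2)}(x)$ and $\eta_{h^*(x)}(x) = \eta_{(1)}(x)$. Hence
\[
\eta_{h^*(x)}(x) - \eta_{\bar h(x)}(x) \;\geq\; \eta_{(1)}(x) - \eta_{(2)}(x).
\]
Now I condition on the hypothesized good event (probability $\geq 1-\delta$), on which $\eta_{h^*(x)}(x) - \eta_{\bar h(x)}(x) < 2\phi$ simultaneously for every $x$. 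Combining the two inequalities shows that whenever $\bar h(x)\neq h^*(x)$, we must have $\eta_{(1)}(x) - \eta_{(2)}(x) < 2\phi$. Thus on the good event the integrand is bounded as
\[
(\eta_{h^*(x)}(x) - \eta_{\bar h(x)}(x))\,\ind{\bar h(x)\neq h^*(x)} \;\leq\; 2\phi\cdot\ind{\eta_{(1)}(x)-\eta_{(2)}(x) < 2\phi}.
\]

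Finally I would take expectations over a fresh $X\sim P_X$ and invoke the Tsybakov noise condition (Assumption \ref{Assp_noise_margin}):
\[
\err{\bar h} - \err{h^*} \;\leq\; 2\phi\cdot\pr{\eta_{(1)}(X)-\eta_{(2)}(X) < 2\phi} \;\leq\; 2\phi\cdot C_\beta (2\phi)^\beta \;=\; C_\beta(2\phi)^{\beta+1},
\]
which is precisely the claimed bound, and it holds on the event of probability $\geq 1-\delta$.

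I do not expect any serious obstacle here: the hypothesis of the proposition already packages all the algorithmic content (subsampling, denoising, aggregation) into the single quantity $\phi$, and what remains is the standard conversion from a uniform pointwise regression-type gap to classification excess risk via the margin assumption. The only subtlety worth being careful about is that the good event is a statement about the (data- and subsample-dependent) classifier $\bar h$, while the expectation defining $\err{\bar h}$ is over an independent test point $X$; this is handled cleanly by treating the good event as a condition on $(\Xspl,\Yspl,\{\Xspl_i'\})$ and doing the $X$-expectation afterwards, as above.
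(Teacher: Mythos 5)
Your proposal is correct and follows essentially the same route as the paper's own proof: the paper's case split into $A$ ($\Delta(x)\doteq\eta_{(1)}(x)-\eta_{(2)}(x)\geq 2\phi$, where the good event forces $\bar h(x)$ to agree with $h^*(x)$) and $B$ ($\Delta(x)<2\phi$) is exactly the contrapositive of your observation that, on the good event, $\bar h(x)\neq h^*(x)$ implies $\Delta(x)<2\phi$. Both arguments then conclude identically by bounding the excess error by $2\phi\cdot\pr{\Delta(X)<2\phi}\leq C_\beta(2\phi)^{\beta+1}$ via Assumption \ref{Assp_noise_margin}, with the same handling of the randomness (condition on the data and subsamples, integrate over the test point $X$).
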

\begin{proof}
	
	Since, for any classifier $h$, $\eta_{h(x)}(x) \doteq \pr{Y = h(x)}$, the excess error of the sub-NN classifier $\bar{h}$ can be written as 
	$\expec_X \left[ \eta_{h^*(X)}(X) - \eta_{\bar{h}(X)}(X) \right]$. 
	
	Thus, the assumption in the proposition 
	statement  -- that for all $x\in X$, $ \eta_{h^*(x)}(x) - \eta_{\bar h(x)}(x) < 2\phi $ (with probability at least $1- \delta$) -- yields a trivial bound of $2\phi$ on the excess error. We want to refine this bound. 
	
	Let $\eta_{(l)}(x)$ be the $l$-th largest entry in the vector $\eta(x) = \braces{\eta_l}_{l = 1}^L$, and define $\Delta(x) \doteq \eta_{(1)}(x) - \eta_{(2)}(x)$. 
	
	Then at any fixed point $x$, we can refine the bound on excess error at $x$ (namely $\eta_{h^*(x)}(x) - \eta_{\bar{h}(x)}(x)$), by separately considering the following exhaustive conditions $A(x)$ and $B(x)$ on $x$:
	\begin{itemize}
		\item[$A$:] $\Delta(x) \geq 2\phi$, in which case the excess error is 0. This follows from $\eta_{(1)}(x) = \eta_{h^*(x)}(x)$ and that:
		\begin{align*}
		\eta_{h^*(x)}(x) - \eta_{\bar h(x)}(x) < 2\phi \leq \eta_{h^*(x)}(x) - \eta_{(2)}(x)
		\end{align*}
		
		In other words $\eta_{\bar h(x)}(x)$ is larger than $\eta_{(2)}$, so equals $\eta_{(1)}(x)\doteq\eta_{h*(x)}(x)$. 
		
		\item[$B$:] $\Delta(x) < 2\phi$, in which case the excess error cannot be refined at $x$. However, the total mass of such $x$'s is at most  $C_\beta (2\phi)^\beta$ by Tsybakov's noise condition (Assumption \ref{Assp_noise_margin}).
	\end{itemize}
	Combining these conditions, we have with probability at least $1-\delta$ that the excess error satisfies:
	\begin{align*}
	\text{err}&(\bar{h}) - \text{err}(h^*) 
	= \expec_X \left[\eta_{h^*(X)}(X) - \eta_{\bar{h}(X)}(X) \right] \\
	&\leq \expec_X \left[ 0\cdot \ind{A(X)}+ 2\phi \cdot \ind{B(X)} \right] \\
	&\leq 2\phi \cdot\expec [\ind{B(X)}] \leq 2\phi \cdot {\mathbb{P}}\left(\Delta(x) \leq 2\phi \right)\\
	&\leq C_\beta (2\phi)^{\beta+1}. 
	\end{align*}
\end{proof}

Combining the results of this section yield the main theorem whose proof is given next.

\subsection{Proof Of Theorem \ref{theo:main}}
Our main result follows easily from Propositions \ref{proposition:denoised_NN_reg} and \ref{prop:agg_clas}. This is given below. 
\begin{proof}  
Fix any $0< \delta < 1$. Note that the conditions of Lemma \ref{lemma:error_h_bar} are verified in Proposition \ref{proposition:denoised_NN_reg}, 
namely that, with probability $1-3\delta$, all regression errors (of submodels) are bounded by 
\begin{align} 
\phi \doteq\, C \paren{\frac{\V\ln (Ln/\delta)}{n}}^{\alpha/(2\alpha + d)} 
         +C \paren{\frac{\V \ln(m/\delta)}{m}}^{\alpha/d}, \label{eq:phi}
 \end{align}
 where $C$ is a constant depending on $\alpha$ and $\lambda$.
         
Next, the conditions of Proposition \ref{prop:agg_clas} are obtained in Lemma \ref{lemma:error_h_bar} with the same setting of $\phi$. It follows that with probability at least $1-3L\delta$, we have:
  \begin{align*}
       \err{\bar h} -\err{h^*}  &\leq C_\beta(2\phi)^{\beta+1}, 
  \end{align*}
  with $\phi$ as in \eqref{eq:phi}. Given that $g(x)=x^{\beta+1}$ is a convex function, we conclude by applying Jensen's inequality (viewing $(1/2C) \cdot \phi$ as an average of two terms). 
\end{proof}

\section{EXPERIMENTS}
\label{sec:experiments}
\paragraph{Experimental Setup.}
Data is standardized along each coordinate of $X$.  \\
- \emph{Fitting subNN.} We view the subsample size $m$ and the number $I$ of subsamples as exogenous parameters determined by the practical constraints of a given application domain. 
Namely, smaller $m$ yields faster prediction and is driven by prediction time requirement, while larger $I$ improves 
prediction error but is constrained by available computing units. However, much of our experiments concern the sensitivity of subNN to $m$ and $I$, and yield clear insights into tradeoffs on these choices. Thus, for any fixed choice of $m$ and $I$, we choose $k$ by 2-fold cross-validation. The search for $k$ is done in two stages: 
first, the best value $k'$ (minimizing validation error) is picked on a log-scale $\{2^i \}_{i=1}^{\lceil\log n\rceil}$, then a final choice for $k$ is made on a refined linear range $[\lceil{k'/2}\rceil-10:2k'+10]$. \\
- \emph{Fitting $k$-NN.} $k$ is also chosen in two stages as above.


Table \ref{tab:data_description} describes the datasets used in the experiments. We use $k$-$d$-tree for fast NN
search from Python \texttt{scikit-learn} for all datasets but NewsGroups20 for which we perform a direct search (due to high-dimensionality and sparsity). As explained earlier, our main focus is on classification, however theoretical insights from previous sections extend to regression, as substantiated in this section. The code in Python can be found at \url{https://github.com/lirongx/SubNN}.

\paragraph{Results.}
Our main experimental results are described in Table \ref{tab:result_subNN}, showing the relative errors (error of the method divided by that of vanilla $k$-NN) and relative prediction time (prediction time divided by that of $k$-NN) for versions of subNN($(m/n)$, $I$), where $m/n$ is the subsampling ratio used, and $I$ is the number of subsamples. 
For regression datasets, the error is the MSE, while for classification we use $0$-$1$ error.  
The prediction time reported for the subNN methods is the maximum time over the $I$ subsamples plus aggregation time, reflecting the effective prediction time for the distributed-computing settings motivating this work. 
The results support our theoretical insights, namely that subNN can achieve accuracy close or matching that of $k$-NN, while at the same time achieving fast prediction time, on par or better than those of $1$-NN.

- \emph{Sensitivity to $m$ and $I$.} As expected, better times are achievable with smaller subsample sizes, while better prediction accuracy is achievable as more subsamples tend to reduce variability. This is further illustrated for instance in Figure \ref{fig:SubNN_MiniBooNE}, 
where we vary the number of subsamples. Interestingly in this figure, for the MiniBoone dataset, the larger subsampling ratio $0.75$ yields the best accuracy over any number of subsamples, but the gap essentially disappears when enough subsamples are used. We thus have the following prescription in choosing $m$ and $I$: 
while small values of $I$ work generally well, large values of $I$ can only improve accuracy; on the other hand, subsampling ratios of $0.1$ yield good time-accuracy tradeoffs accross datasets.




- \emph{Benefits of denoising.} In Figure \ref{fig:bag1NN_comp}, we compare subNN with pure bagging of $1$-NN models. As suggested by theory, we 
see that the bagging approach does indeed require considerably more subsamples to significantly improve over the error 
of vanilla $1$-NN. In contrast, the accuracy of subNN quickly tends to that of $k$-NN, in particular for TwitterBuzz where $1$ or $3$ subsamples are sufficient to statistically close the gap, even for small subsampling ratio. This could be due to hidden but beneficial structural aspects of this data. In all cases, the experiments further highlights the benefits of our simple denoising step, as a variance reduction technique. This is further supported by the error-bars (std) over 5 repetitions as shown in Figure~\ref{fig:bag1NN_comp}.

\paragraph{Conclusion.} We propose a procedure with theoretical guarantees, which is easy to implement over distributed computing resources, and which achieves good time-accuracy tradeoffs for nearest neighbor methods. 

\newpage

\begin{table*}[htbp]
	\vspace{-0.2in}
	\begin{center}
		\small
		
		\caption{Ratios Of \emph{Error Rates} and \emph{Prediction Times} Over Corresponding Errors And Times Of $k$-NN.}
		\label{tab:result_subNN}
		\begin{tabular}{|c||ccc||ccc|}
			\hline
			\multicolumn{1}{|c||}{} & \multicolumn{3}{c||}{Relative Error} & \multicolumn{3}{c|}{Relative Time} \\
			\hline
			Data		& 1NN	& subNN(0.1,10)	& subNN(0.75,10)& 1NN		& subNN(0.1,10)	& subNN(0.75,10)\\\hline
			MiniBooNE	& 1.280	& 1.039			& {\bf 1.027}	& 0.609		& {\bf 0.247}	& 0.547			\\\hline
			TwitterBuzz	& 1.405	& {\bf 1.000}	& 1.005			& 0.550		& {\bf 0.185}	& 0.522 		\\\hline
			LetterBNG	& 1.127	& {\bf 1.086}	& 1.144			& 0.459		& {\bf 0.219}	& 0.396			\\\hline
			NewsGroups20& 1.122	& 1.206			& {\bf 1.002}	& 0.610		& {\bf 0.081}	& 0.668			\\\hline
			YearPredMSD	& 1.859	& {\bf 1.082}	& 1.110			& 0.847		& {\bf 0.025}	& 0.249 		\\\hline
			WineQuality	& 1.276	& {\bf 1.011}	& 1.018			& 0.989		& {\bf 0.885}	& 0.906 		\\\hline 
			
		\end{tabular}
			
	\end{center}
\end{table*}

\begin{figure*}[htbp]
	\vspace{-0.3in}
	\centering
	\includegraphics[height=2.25 in]{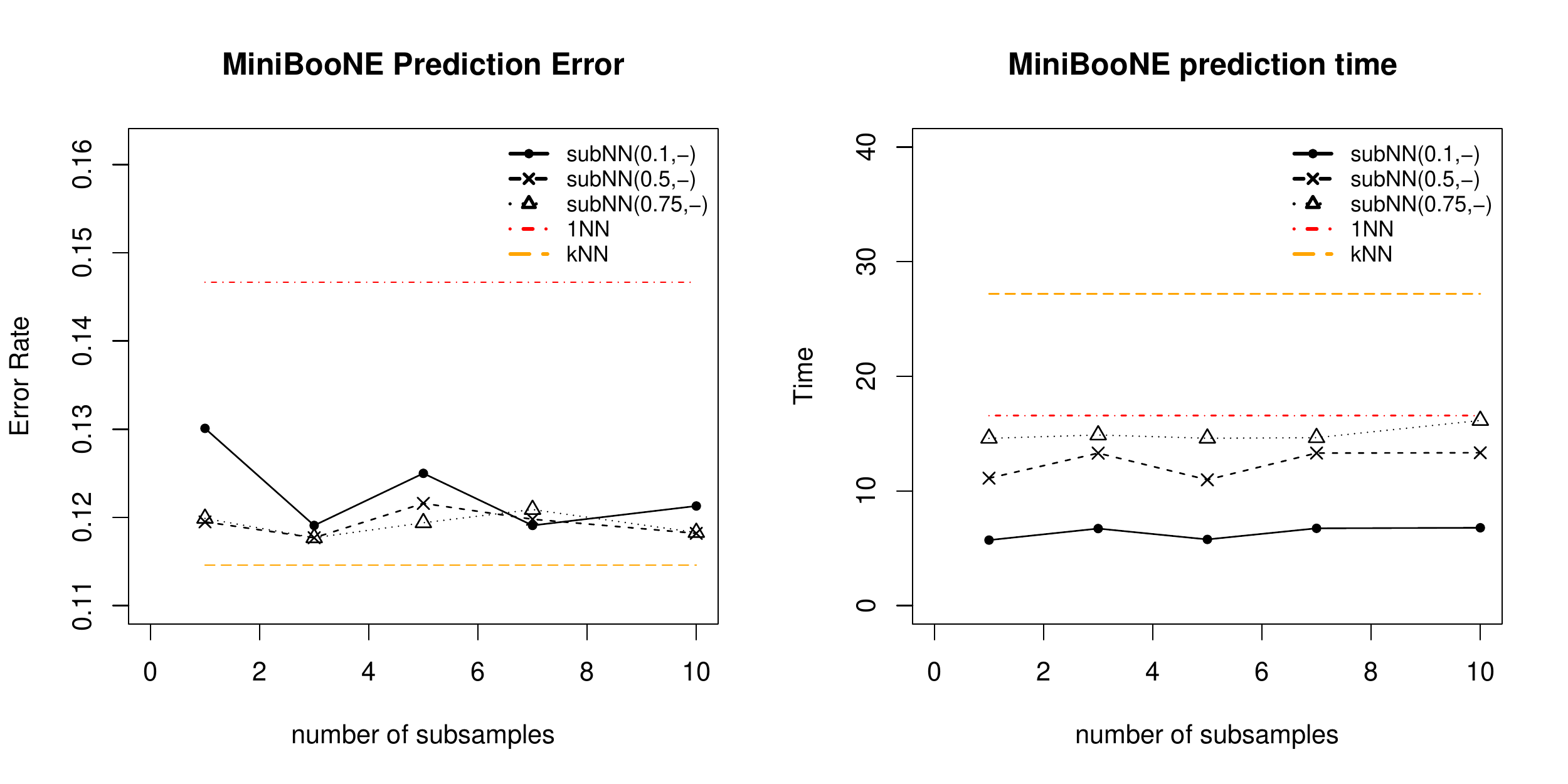}
	\vspace{-0.1in}
	\caption{Comparing the Effect Of Subsampling Ratios on Prediction And Time Performance Of SubNN. Shown are SubNN estimates using subsampling ratios 0.1, 0.5, and 0.75.}
	\label{fig:SubNN_MiniBooNE}
\end{figure*}

\begin{figure*}[htbp]
	\centering
	\includegraphics[height=2.25in]{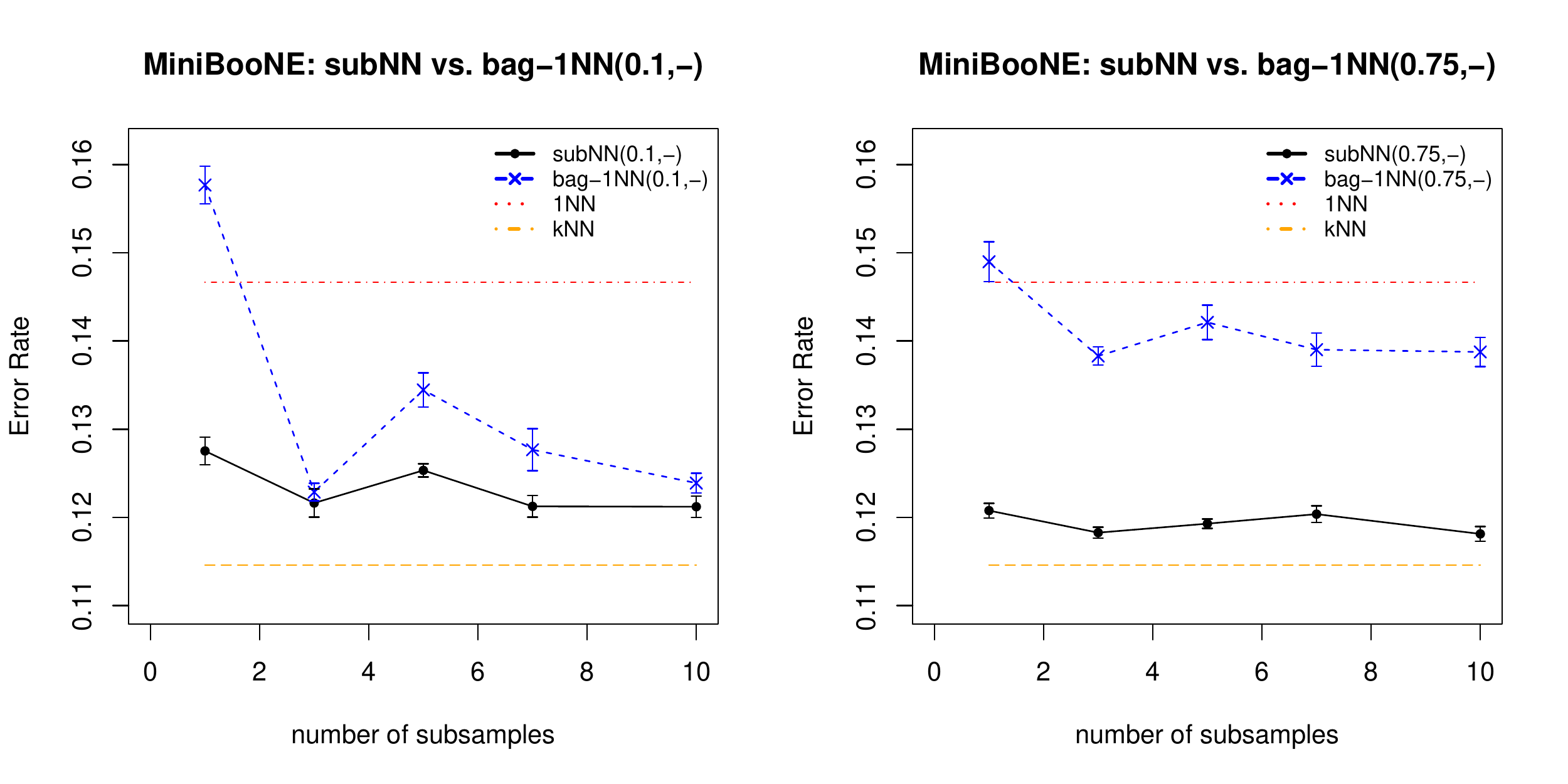} 
	\includegraphics[height=2.25in]{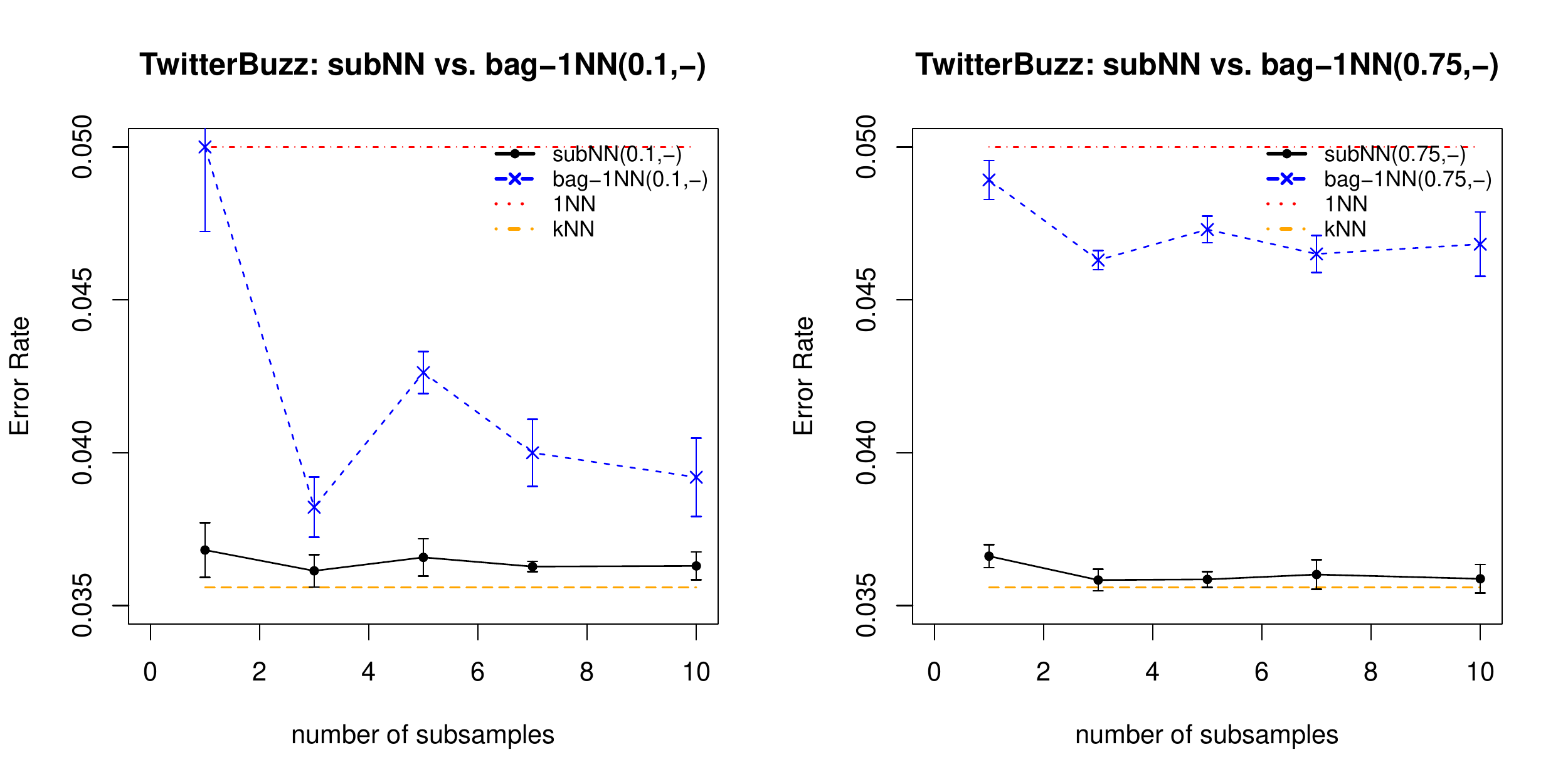} 
	\vspace{-0.1in} 
	\caption{Bagged 1NN Compared With SubNN Using Subsampling Ratios 0.1 (Left) And 0.75 (Right).}
	\label{fig:bag1NN_comp}
	\vspace{-0.25in}

\end{figure*}

\clearpage 
\begin{small}
\bibliography{refs}
\end{small}

\clearpage
 \appendix

 \section{PROOF OF MAIN RESULTS} 
  In this section, we show the proof of Proposition \ref{prop:kNN_reg} (uniform bound on $k$NN regression error). The proof is done by decomposing the regression error into bias and variance (Lemma \ref{lemma:knn_vars}) and bound each of them separately in Lemma \ref{lemma:knn_bias} and \ref{lemma:knn_vars}. Lemma \ref{lemma:r_k} will be proved first as a by-product.
  
  The decomposition is the following. Define $\tilde{\eta}(x)$ as: 
    $$\tilde{\eta}(x) = \mathbb{E}_{\bf Y|X}\hat{\eta}(x) = \frac{1}{k}\sum_{i \in k\text{NN-I}(x)} \eta(X_i) \text{.}$$
  Viewing $\tilde{\eta}$ as the expectation of $\hat{\eta}$ conditioning sample $\Xspl$, we can have the following variance and bias decomposition of error of $\hat{\eta}$:
  \begin{align}
  \| \hat{\eta}(x) - \eta(x)\|_\infty \leq \| \hat{\eta}(x) - \tilde{\eta}(x)\|_\infty + \| \tilde{\eta}(x) - \eta(x)\|_\infty \text{.}
  \label{equ:knn_var_bias_decomp}
  \end{align}
 
  We start by introducing a known result on relative VC bound in Lemma \ref{lemma:rela_VC}. Then in Lemma \ref{lemma:r_k}, we use it to give a uniform bound on $r_k(x)$ - the distance between query point $x$ and its $k$-th nearest neighborhood in data. After that we use Lemma \ref{lemma:r_k} to bound bias and variance separately in Lemma \ref{lemma:knn_bias} and \ref{lemma:knn_vars}. Proposition \ref{prop:kNN_reg} is concluded by combing the two bounds.
  
  \begin{lemma}[Relative VC Bound, Vapnik 1971] \label{lemma:rela_VC}
     Let ${\cal B}$ be a set of subsets of $\cal X$. ${\cal B}$ has finite VC dimension $\cal V$. For $n$ drawn sample $ X_1,\dots,X_n$, the empirical probability measure is defined as $\mathbb{P}_n(B) = \frac{1}{n}\sum_{i=1}^{n} {\bf 1}_{X_i \in B}$.  Define $\alpha_n = ({\cal V}\ln 2n + \ln(8/\delta))/n$. Let $0<\delta<1$, with probability at least $1-\delta$ over the randomness in $X_1,\dots,X_n$, the following holds simultaneously for all $B \in {\cal B}$:
     $$\mathbb{P}(B) \leq \mathbb{P}_n(B) + \sqrt{\mathbb{P}_n(B)\alpha_n} + \alpha_n \text{.}$$
  \end{lemma}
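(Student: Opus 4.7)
Since this is the classical relative Vapnik--Chervonenkis inequality, the plan is to sketch the standard three-step proof: symmetrization by a ghost sample, reduction to a finite class via the growth function, and a tail bound on the symmetrized deviation. The final one-sided form $\mathbb{P}(B) \leq \mathbb{P}_n(B) + \sqrt{\mathbb{P}_n(B)\alpha_n} + \alpha_n$ will then come from algebraic inversion of a relative-deviation inequality of the type $(\mathbb{P}(B) - \mathbb{P}_n(B))/\sqrt{\mathbb{P}(B)} \leq t$.

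First I would introduce an independent ghost sample $X_1',\dots,X_n'$ with empirical measure $\mathbb{P}_n'$, and use a Chebyshev-style argument to show that whenever some $B$ witnesses a large deviation $\mathbb{P}(B) - \mathbb{P}_n(B)$ relative to $\sqrt{\mathbb{P}(B)}$, it also witnesses, with probability at least $1/2$ over the ghost sample, a comparable deviation $\mathbb{P}_n'(B) - \mathbb{P}_n(B)$ relative to $\sqrt{(\mathbb{P}_n(B)+\mathbb{P}_n'(B))/2}$. The net effect is to replace the sup over a continuum of values of $\mathbb{P}(B)$ by a quantity depending only on the pooled $2n$ points, at the cost of a factor of $2$ in the failure probability.

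Second, conditional on the pooled set $\{X_i\} \cup \{X_i'\}$, Sauer--Shelah gives at most $(2n)^{\mathcal{V}}$ distinct subsets induced by $\mathcal{B}$, so a union bound reduces the problem to a finite family. For each fixed $B$ in that family, $\mathbb{P}_n'(B) - \mathbb{P}_n(B)$ is distributed as a signed sum over a uniformly random partition of the pool, and a Hoeffding/Bernstein bound for sampling without replacement gives a tail of order $\exp(-c t^2 n)$. Equating the combined bound $(2n)^{\mathcal{V}} \exp(-c t^2 n)$ with $\delta$ yields the stated scale $t^2 \asymp (\mathcal{V}\ln 2n + \ln(1/\delta))/n = \alpha_n$.

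Finally, the inequality $\mathbb{P}(B) - \mathbb{P}_n(B) \leq \sqrt{\alpha_n \mathbb{P}(B)}$ is quadratic in $\sqrt{\mathbb{P}(B)}$; solving for $\mathbb{P}(B)$ and using $\sqrt{a+b} \leq \sqrt{a}+\sqrt{b}$ together with an AM-GM step on the cross term produces the advertised form $\mathbb{P}(B) \leq \mathbb{P}_n(B) + \sqrt{\mathbb{P}_n(B)\alpha_n} + \alpha_n$. The main obstacle is getting the precise constants in $\alpha_n$ (in particular the factor $8$ inside the logarithm) to match the statement, which requires tracking the various multiplicative slacks in the symmetrization and union-bound steps; since this is a well-known theorem of Vapnik, in practice I would simply cite the standard reference for the sharp constants rather than reprove them from scratch.
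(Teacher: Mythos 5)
The paper does not prove this lemma at all: it imports it verbatim as a classical result (the Vapnik--Chervonenkis relative deviation inequality, attributed to Vapnik 1971), so any proof you give is by definition a different route from the paper's. Your sketch is the standard and essentially correct one: ghost-sample symmetrization, Sauer--Shelah reduction to at most $(2n)^{\mathcal{V}}$ induced subsets of the pooled sample, a tail bound for the random split (sampling without replacement), and finally the quadratic inversion of $\mathbb{P}(B)-\mathbb{P}_n(B)\leq\sqrt{\alpha_n\,\mathbb{P}(B)}$ via $\sqrt{a+b}\leq\sqrt{a}+\sqrt{b}$, which indeed yields exactly the advertised form $\mathbb{P}(B)\leq\mathbb{P}_n(B)+\sqrt{\mathbb{P}_n(B)\alpha_n}+\alpha_n$. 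Two caveats on the details. First, in the symmetrization step for \emph{relative} deviations, the Chebyshev argument that the ghost sample witnesses a comparable deviation with probability at least $1/2$ only goes through when $n\mathbb{P}(B)$ is bounded below by a constant; small-measure sets must be handled separately (or one accepts a symmetrization factor of $4$ rather than $2$), and this is one place the final constants grow. Second, the sharp classical statements (e.g., as used by Chaudhuri and Dasgupta, 2014) carry an extra constant factor (typically $4$) multiplying $\alpha_n$; the paper's $\alpha_n=(\mathcal{V}\ln 2n+\ln(8/\delta))/n$ silently absorbs such factors, so your honest admission that you would cite a standard reference for the exact constants is not a defect --- it is precisely what the paper itself does, and the lemma is anyway only used in the paper up to constants (the $3\max(\cdot,\cdot)$ step in the proof of Lemma \ref{lemma:r_k} is insensitive to them).
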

  


  Using the above result, we can prove Lemma \ref{lemma:r_k} (bound on $r_k$).
  \begin{proof}[{\bf Proof of Lemma \ref{lemma:r_k}}]
    By Lemma \ref{lemma:rela_VC}, let $\alpha_n = \left({\cal V}\ln 2n + \ln(8/\delta)\right)/n$, with probability at least $1 - \delta$ over the randomness in $\bf X$ and $\epsilon>0$, for any $x\in{\cal X}$ and closed ball $B_{x}(\epsilon,k) = B(x, (1-\epsilon) r_k(x))$, we have:
    \begin{align*}
      \Prob(B_{x}(\epsilon,k))
      &\leq \Prob_n(B_{x}(\epsilon,k)) + \sqrt{\Prob_n(B_{x}(\epsilon,k))\alpha_n} + \alpha_n \\
      &\leq 3\max \left(\Prob_n(B_{x}(\epsilon,k)), \alpha_n \right) \\
      &< 3\max \left(\frac{k}{n}, \frac{{\cal V}\ln 2n + \ln\frac{8}{\delta}}{n} \right) \text{.}
    \end{align*}
      	
    By Assumption \ref{Assp_int_dim} (intrinsic dimension), for any $x\in {\cal X}$ and any $\epsilon>0$, $\mathbb{P} \left( B_{x}(\epsilon,k) \right)$ is lower-bounded as below.
    
      $$\mathbb{P} \left( B_{x}(\epsilon,k) \right) =  \mathbb{P}\left( B \left( x,\left( 1-\epsilon \right) r_k \left(x \right) \right) \right) \geq C_d (1-\epsilon)^{d} r_k^d(x) \text{.}$$ 
      
    Therefore, with probability at least $1-\delta$, the following holds simultaneously for all $x\in{\cal X}$:
    \begin{align*}
      &r_k(x) \\
      &\leq \left( \frac{\mathbb{P}(B_{x}(\epsilon,k))}{C_d} \right)^{\frac{1}{d}} \\
      &\leq \left(\frac{3}{C_d}\right)^{\frac{1}{d}} \cdot \max \left(\frac{k}{n}, \frac{{\cal V}\ln 2n + \ln\frac{8}{\delta}}{n} \right)^\frac{1}{d} \cdot (1-\epsilon)^{-1} \text{.}
    \end{align*}
    
    Let $\epsilon \rightarrow 0$ in the above inequality and conclude the proof.
  \end{proof}

  Using Lemma \ref{lemma:r_k} (uniform $r_k(x)$ bound), we can get a uniform bound on the bias of $\hat{\eta}$:
  
  \begin{lemma}[Bias of $\hat{\eta}$] \label{lemma:knn_bias}
  	Let ${\cal V}$ be the VC dimension of the class of balls on ${\cal X}$. For $0<\delta<1$ and $k\geq {\cal V}\ln 2n + \ln\frac{8}{\delta}$, with probability at least $1 - \delta$ over the randomness in the choice of $\bf X$, the following inequality holds simultaneously for all $x \in {\cal X}$:
  	$$\| \tilde{\eta}(x) - \eta(x) \|_\infty \leq \lambda \cdot \left( \frac{3}{C_d} \cdot \frac{k}{n} \right)^\frac{\alpha}{d} \text{.}$$
  \end{lemma}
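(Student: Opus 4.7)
The plan is to decompose $\tilde{\eta}(x) - \eta(x)$ pointwise, apply H\"older smoothness to each term in the average, and then invoke the uniform upper bound on $r_k(x)$ from Lemma \ref{lemma:r_k}. Concretely, writing
\[
\tilde{\eta}(x) - \eta(x) = \frac{1}{k}\sum_{i \in k\text{NN-I}(x)} \bigl(\eta(X_i) - \eta(x)\bigr),
\]
the triangle inequality in the $\|\cdot\|_\infty$ norm, combined with Assumption \ref{Assp_smooth}, yields
\[
\|\tilde{\eta}(x) - \eta(x)\|_\infty \leq \frac{1}{k}\sum_{i \in k\text{NN-I}(x)} \lambda \|X_i - x\|^\alpha \leq \lambda\, r_k(x)^\alpha,
\]
since every $X_i$ in $k\text{NN-I}(x)$ is within distance $r_k(x)$ of $x$.

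Next I would plug in the uniform bound from Lemma \ref{lemma:r_k}. That lemma states that, with probability at least $1-\delta$ over $\Xspl$,
\[
\sup_{x\in\X} r_k(x) \leq \left(\tfrac{3}{C_d}\right)^{1/d} \max\!\left(\tfrac{k}{n},\,\tfrac{\V\ln 2n + \ln(8/\delta)}{n}\right)^{1/d}.
\]
The hypothesis $k \geq \V\ln 2n + \ln(8/\delta)$ is precisely what makes the $\max$ on the right equal to $k/n$. Substituting and raising to the $\alpha$ power gives
\[
\lambda\, r_k(x)^\alpha \leq \lambda \cdot \left(\frac{3}{C_d}\cdot \frac{k}{n}\right)^{\alpha/d},
\]
which is uniform in $x\in\X$, completing the claim on the same high-probability event used in Lemma \ref{lemma:r_k}.

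There is no real obstacle here; the lemma is essentially a bookkeeping step that packages the bias of $k$-NN in terms of the NN-radius bound already proved. The only subtle point worth verifying is that the lower bound on $k$ in the hypothesis is used \emph{only} to simplify the $\max$ to $k/n$, so that the bias bound takes the clean form stated. The result will then feed directly into the variance/bias decomposition \eqref{equ:knn_var_bias_decomp} leading to Proposition \ref{prop:kNN_reg}.
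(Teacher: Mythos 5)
Your proposal is correct and follows essentially the same route as the paper's proof: the triangle inequality plus H\"older smoothness reduces the bias to $\lambda\, r_k(x)^\alpha$, and the uniform bound of Lemma \ref{lemma:r_k} finishes the argument. Your explicit observation that the hypothesis $k \geq \V\ln 2n + \ln(8/\delta)$ serves only to collapse the $\max$ to $k/n$ is a point the paper leaves implicit, but it is the same proof.
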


  \begin{proof}
  	First, for a fixed sample $\bf X$, by Assumption \ref{Assp_smooth} (Smoothness of $\eta$) we have: 
  	\begin{align*}
  	  &\left\| \tilde{\eta}(x) - \eta(x)\right\|_\infty \\
  	  &= \Big\| \frac{1}{k} \sum_{i \in k\text{NN-I}(x)}\eta(X_i) - \eta(x) \Big\|_\infty \\
  	  &\leq \frac{1}{k} \sum_{i \in k\text{NN-I}(x)} \left\| \eta(X_i) - \eta(x) \right\|_\infty \\
  	  &\leq \frac{1}{k} \sum_{i \in k\text{NN-I}(x)} \lambda \| X_i - x \|^\alpha \\
  	  &\leq \frac{1}{k}\sum_{i \in k\text{NN-I}(x)} \lambda r_k^\alpha (x) = \lambda r_k^\alpha (x) \text{.}
  	\end{align*}
  	It follows by Lemma \ref{lemma:r_k}, that, with high probability at least $1-\delta$ over $\bf X$, the following holds simultaneously for all $x \in {\cal X}$.
  	$$\| \tilde{\eta}(x) - \eta(x) \|_\infty \leq \lambda r_k^\alpha (x) \leq \lambda \cdot \left( \frac{3}{C_d} \cdot \frac{k}{n} \right)^\frac{\alpha}{d} \text{.}$$
  \end{proof}
  
  \begin{lemma}[Variance of $\hat{\eta}$] \label{lemma:knn_vars}
  	Let ${\cal V}$ be the VC dimension of balls on ${\cal X}$. For $0<\delta<1$, with probability at least $1 - \delta$ over the randomness in $\bf (X,Y)$, the following inequality holds simultaneously for all $x \in {\cal X}$:
  	
  	$$\|\hat{\eta}(x) - \tilde{\eta}(x) \|_\infty < \sqrt{\cal V} \sqrt{\frac{1}{k} \ln \frac{nL}{\delta}} \text{.} $$
  	
  \end{lemma}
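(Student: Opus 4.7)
My plan is to reduce to a standard Hoeffding concentration by first conditioning on $\Xspl$, and then upgrade to a uniform-in-$x$ statement via Sauer's lemma applied to the VC class of closed balls on $\X$.

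First, I would note that, conditional on $\Xspl$, each coordinate-wise difference can be written as
\[
\hat\eta_l(x) - \tilde\eta_l(x) \,=\, \frac{1}{k}\sum_{i \in k\text{NN-I}(x)}\bigl[\ind{Y_i = l} - \eta_l(X_i)\bigr],
\]
which is an average of $k$ independent, mean-zero, $[-1,1]$-bounded random variables. For any \emph{fixed} $k$-subset $S \subset [n]$, Hoeffding's inequality therefore yields a subgaussian bound of the form $2\exp(-2kt^2)$ on the analogous centered average over $S$, applied to the $\Yspl$-randomness.

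The main obstacle is that the index set $k\text{NN-I}(x)$ varies with $x$, so Hoeffding alone only provides a pointwise control. To upgrade to a uniform bound, I would exploit the fact that $k\text{NN-I}(x)$ is exactly $\Xspl \cap B(x, r_k(x))$, so it lies in the trace on $\Xspl$ of the class $\B$ of closed balls on $\X$. Since $\B$ has VC dimension $\V$, Sauer's lemma bounds the number of such distinct traces by $(en/\V)^{\V} \leq n^{\V}$ for moderate $n$, giving a finite family of candidate $k$-NN index sets.

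The last step is a union bound of the Hoeffding tail across the $L$ labels and these at most $n^{\V}$ candidate index sets. Choosing $t$ of order $\sqrt{\V\ln(nL/\delta)/k}$ balances the union-bound factor against the subgaussian tail, and after absorbing constants it yields the claimed $\ell_\infty$ inequality simultaneously over $x\in \X$ with probability at least $1-\delta$. Note the structural parallel with Lemma \ref{lemma:r_k}, where the same VC class of balls was used to obtain uniformity in $x$; here the same complexity control governs the $\sqrt{\V}$ factor in the variance term.
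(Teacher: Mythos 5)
Your proposal matches the paper's own proof essentially step for step: conditioning on $\Xspl$, applying Hoeffding's inequality to the $\Yspl$-randomness coordinate-wise (then union-bounding over the $L$ labels), and upgrading to uniformity in $x$ by observing that $k\text{NN-I}(x)$ is the trace of a ball on $\Xspl$, so Sauer's lemma bounds the candidate index sets by $\left(en/\V\right)^{\V}$, with the same final calibration $t \asymp \sqrt{\V\ln(nL/\delta)/k}$. The argument is correct as stated and no gap remains.
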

  \begin{proof}

    Consider the $l$-th value of $\hat{\eta}(x) - \tilde{\eta}(x)$:
      $$ \left( \hat{\eta}(x) - \tilde{\eta}(x) \right)_l = \frac{1}{k} \sum_{i \in k\text{NN-I}(x)} \big( \ind{Y_i = l} - \eta_l(X_i) \big) \text{.} $$
  	
  	Fix $x$, $\bf X$ and consider the randomness in $\bf Y$ conditioned on $\bf X$. Use Hoeffding's Inequality. There are $k$ independent terms in the above summation and $\Ep (\ind{Y_i = l} | X_i) = \eta_l(X_i)$. So the following holds with probability at least $1 - \delta_0$ over the randomness in $\bf Y$:
  	  $$\left( \hat{\eta}(x) - \tilde{\eta}(x) \right)_l \leq \sqrt{ \frac{1}{2k} \ln \frac{2}{\delta_0} } \text{.}$$
  	Apply the above analysis to $l = 1,\dots,L$ and combine them by union bound. So the following inequality holds with probability at least $1 - L\cdot\delta_0$ over the randomness in $\bf Y$:
  	
   	  $$\|\hat{\eta}(x) - \tilde{\eta}(x) \|_\infty \leq \sqrt{ \frac{1}{2k} \ln \frac{2}{\delta_0} } \text{.}$$
  	
  	
  	Then consider variations in $x$. Given $\bf X$ fixed, the left hand side of the above inequality can be seen as a function of $k\text{NN-I}(x)$. This is a subset of $\bf X$ covered by ball $B(x, r_k(x))$. By Sauer's Lemma, the number of such subsets of $\bf X$ covered by a ball is bounded by $\big(\frac{en}{{\cal V}}\big)^{\cal V}$. So there are at most $\big(\frac{en}{{\cal V}}\big)^{\cal V}$ many different variations of the above inequality when $x$ varies. We combine the variations by union bound. Let $\delta = \delta_0 \cdot L \cdot \left(\frac{en}{{\cal V}}\right)^{\cal V}$, the following happens with probability at least $1-\delta$ over the randomness in $\bf Y$:
  	\begin{align*}
  	  \sup_{x \in {\cal X}} \| \hat{\eta}(x) - \tilde{\eta}(x) \|_\infty 
  	  &\leq \sqrt{ \frac{1}{2k} \ln \frac{2}{\delta_0} } \\
  	  &\leq \sqrt{\frac{\cal V}{2k} \ln \frac{en}{\cal V} + \frac{1}{2k}\ln \frac{2L}{\delta}} \\
  	  &\leq \sqrt{\cal V}\sqrt{\frac{1}{k} \ln \frac{nL}{\delta}}  \text{.}
  	\end{align*}
  	
  	The above inequality holds for any fixed sample $\bf X$ and the right hand side do not depend on $\bf X$. So it continue to hold for any drawn $\bf X$. Thus we conclude the proof.
  \end{proof}

      Combining the bias and variance of $\hat{\eta}$, we have the bound on uniform kNN regression error.
      
      \begin{proof}[{\bf Proof of Proposition \ref{prop:kNN_reg}}]
      	Apply Lemma \ref{lemma:knn_bias} (bias) and \ref{lemma:knn_vars} (variance) to inequality \ref{equ:knn_var_bias_decomp}, with probability at least $1-2\delta$:
      	$$\| \hat{\eta}(x) - \eta(x) \|_\infty \leq \lambda \left(\frac{3}{C_d}\right)^{\alpha/d} \cdot \left(\frac{k}{n}\right)^{\alpha / d} + \sqrt{\cal V} \cdot \sqrt{\frac{1}{k} \ln \frac{nL}{\delta}}  \text{.}$$
      	The above is minimized at $k = C' ({\cal V}\ln\frac{nL}{\delta})^{\frac{d}{2\alpha + d} } (nC_d)^\frac{2\alpha}{2\alpha + d}$, where $C'$ depend only on $\alpha$ and $\lambda$. Plug in this value of $k$ into the statement above, we obtain $\| \hat{\eta}(x) - \eta(x) \|_\infty = C'' (\frac{{\cal V} \ln (nL/\delta)}{nC_d})^{\frac{\alpha}{2\alpha + d}}$, where $C''$ depend solely on $\alpha$ and $\lambda$.
      \end{proof}
  
\section{ADDITIONAL TABLES AND PLOTS}

  In this section we present supplemental plots and tables.
  
  Table \ref{tab:result_subNN_avgTime} shows the same experiment as in Table \ref{tab:result_subNN}, but reports average prediction time (over the I subsamples) rather than the maximum prediction times, plus the aggregation time. Comparing the two tables, one can see that the differences between average and maximum times are small. In other words, prediction time is rather stable over the subsamples, which is to be expected as these times are mostly controlled by subsample size and computing resource. 

  \begin{table*}[htbp]
  	\vspace{-0.2in}
  	\begin{center}
  		\small
  		
  		\caption{Ratios Of \emph{Error Rates} and \emph{Average Prediction Times} Over Corresponding Errors And Times Of $k$-NN.}
  		\label{tab:result_subNN_avgTime}
  		\begin{tabular}{|c||ccc||ccc|}
  			\hline
  			\multicolumn{1}{|c||}{} & \multicolumn{3}{c||}{Relative Error} & \multicolumn{3}{c|}{Relative Average Time} \\
  			\hline
  			Data		& 1NN	& subNN(0.1,10)	& subNN(0.75,10)& 1NN		& subNN(0.1,10)		& subNN(0.75,10) \\\hline
  			MiniBooNE	& 1.280	& 1.039			& {\bf 1.027}	& 0.609		& {\bf 0.216}		& 0.492	\\\hline
  			TwitterBuzz	& 1.405	& {\bf 1.000}	& 1.005			& 0.550		& {\bf 0.179}		& 0.480	\\\hline
  			LetterBNG	& 1.127	& {\bf 1.086}	& 1.144			& 0.459		& {\bf 0.218}		& 0.390	\\\hline
  			NewsGroups20& 1.122	& 1.206			& {\bf 1.002}	& 0.610		& {\bf 0.076}		& 0.659 \\\hline
  			YearPredMSD	& 1.859	& {\bf 1.082}	& 1.110			& 0.847		& {\bf 0.025} 		& 0.248	\\\hline
  			WineQuality	& 1.276	& {\bf 1.011}	& 1.018			& 0.989		& {\bf 0.826}		& 0.878 \\\hline 
  			
  		\end{tabular}
  		
  	\end{center}
  \end{table*}

  Figure \ref{fig:SubNN_TwitterBuzz} presents the same experiments as in Figure~\ref{fig:SubNN_MiniBooNE}, for an additional dataset (TwitterBuzz). It compares the error and prediction time of subNN models as a function of the number $I$ of subsamples used. Again, as we can see, subNN yields error rates similar to those of kNN,  even for a small number of subsamples. As expected, the best prediction times are achieved with the smaller subsample ratio of 0.1.

  \begin{figure*}[htbp]
  	\vspace{-3.5in}
  	\centering
  	\includegraphics[height=2.5 in]{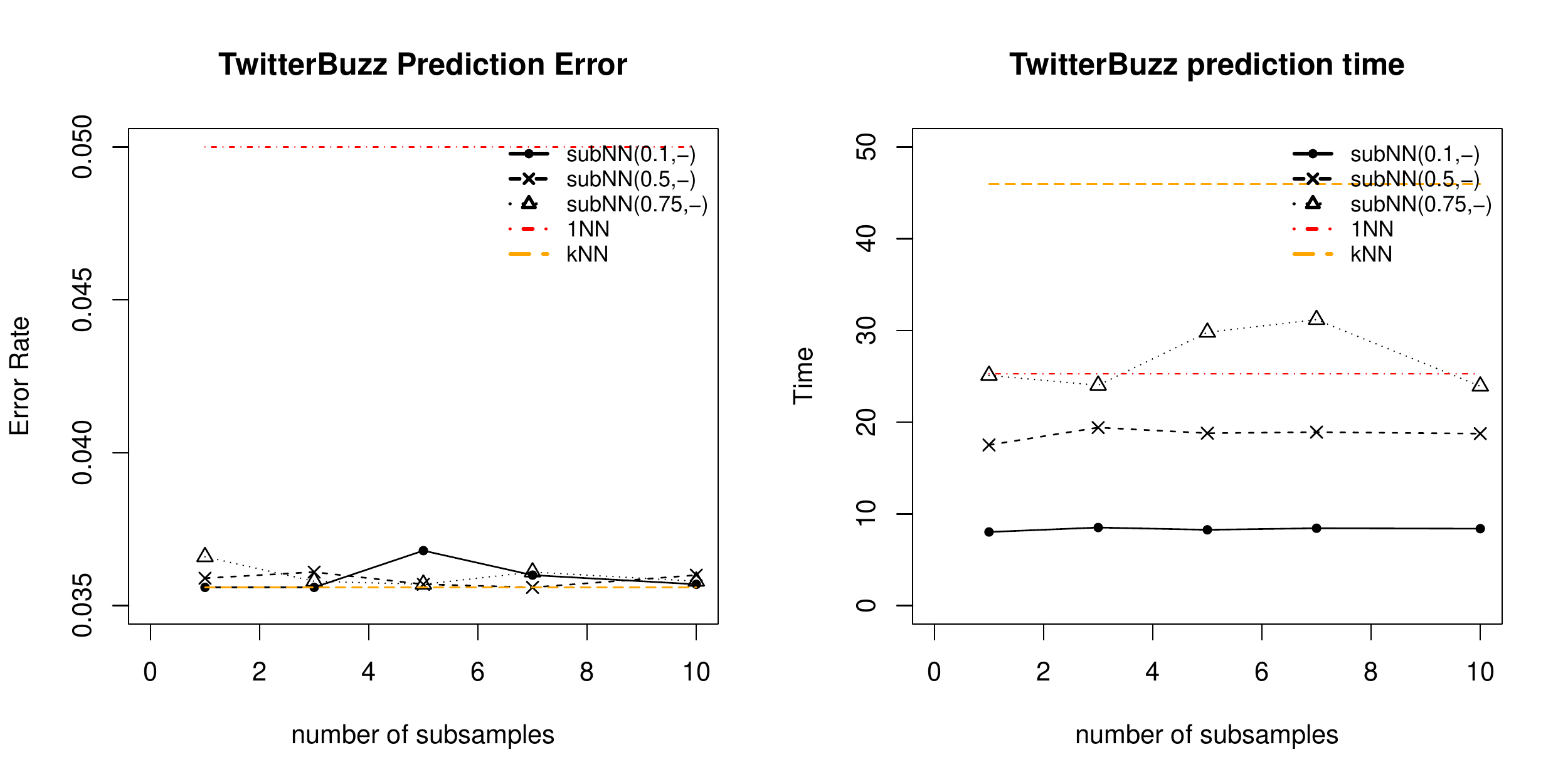}
  	\caption{Comparing the Effect Of Subsampling Ratio And Number Of Models (TwitterBuzz). We find that all the subNN predictors reach error similar to that of $k$-NN even when using only a few subsamples (1 or 3). As expected, subNN with a subsampling ratio of 0.1 results in the best prediction times. }
  	\label{fig:SubNN_TwitterBuzz}
  \end{figure*}

\end{document}